\newtheorem{thm}{Theorem}[section]
\newtheorem{lem}[thm]{Lemma}
\newtheorem{prop}[thm]{Proposition}
\theoremstyle{definition}
\newtheorem{defn}[thm]{Definition}
\newtheorem{ex}[thm]{Example}
\theoremstyle{remark}
\newtheorem{rem}[thm]{Remark}
\newtheorem{notation}[thm]{Notation}
\newcommand{\F}{\mathbb{F}}
\def\[#1\]{\begin{align*}#1\end{align*}}
\def\Tor{{\sf Tor}}
\def\Z{\mathbb{Z}}
\def\F{\mathbb{F}}
\def\geq{\geqslant}
\def\call{\mathcal{L}}
\def\ra{\rightarrow}
\def\HH{{\sf HH}}
\def\THH{{\sf THH}}
\def\Shukla{{\sf Shukla}}
\def\ie{\emph{i.e.}}
\def\id{\mathrm{id}}
\newcommand{\botimes}{\mathbin{\bar{\otimes}}}
\newcommand{\twedge}{\mathbin{\tilde{\wedge}}}
\renewcommand{\L}{\mathcal{L}}
\numberwithin{equation}{thm}
\begin{document}
\title{Relative Loday constructions and applications to higher
  $\THH$-calculations}
\author{Gemma Halliwell}
\address{School of Mathematics and Statistics, University of
  Sheffield, Hicks Building, Hounsfield Road, S3 7RH Sheffield, UK}
\email{glhalliwell1@sheffield.ac.uk}

\author{Eva H\"oning}
\address{Institut Galil\'ee, Universit\'e Paris 13,
99, avenue Jean-Baptiste Cl\'ement, 93430 Villetaneuse, France}
\email{hoening@math.univ-paris13.fr}

\author{Ayelet Lindenstrauss}
\address{Mathematics Department, Indiana University, Bloomington, IN 47405, USA}
\email{alindens@indiana.edu}

\author{Birgit Richter}
\address{Fachbereich Mathematik der Universit\"at Hamburg,
Bundesstra{\ss}e 55, 20146 Hamburg, Germany}
\email{birgit.richter@uni-hamburg.de}

\author{Inna Zakharevich}
\address{
Department of Mathematics, The University of Chicago, 5734 S
University Ave,
Chicago, IL 60637, USA}
\email{zakh@math.uchicago.edu}

\date{\today}
\keywords{higher THH, higher Hochschild homology, topological
  K-theory, Shukla homology
}
\subjclass[2000]{Primary 18G60; Secondary 55P43}
\begin{abstract}
We define a relative version of the Loday construction for a sequence
of commutative $S$-algebras $A \ra B \ra C$ and a pointed simplicial
subset $Y \subset X$. We use this to construct several spectral
sequences for the calculation of higher topological Hochschild
homology and apply those for calculations in some examples that could
not be treated before.

\end{abstract}
\thanks{We thank the Banff
International Research Station for their support and hospitality.  Our
warm thanks go the organizers of the BIRS workshop
\emph{WIT II: Women in Topology 2016}, Maria Basterra, Kristine Bauer,
Kathryn Hess and Brenda Johnson.}
\maketitle

\section{Introduction}
Considering relative versions of (co)homology theories is crucial for
obtaining calculational and structural results. We work in the setting
of commutative $S$-algebras (see \cite{EKMM}) and these can be tensored with
simplicial sets or topological spaces. Important invariants of a
commutative $S$-algebra $A$ are the homotopy groups of $A \otimes X$
for suitable spaces or simplicial sets $X$. We call this the
Loday construction of $A$ with respect to $X$ and denote it by
$\L_X(A)$. 

Important special
cases are the higher order topological Hochschild homology of $A$, 
$$\THH^{[n]}_*(A) = \pi_*(A \otimes S^n), $$ 
where $S^n$ is the $n$-sphere. For $n=1$ this reduces to ordinary
topological Hochschild homology of $A$, $\THH(A)$, which receives a
trace map from the algebraic K-theory of $A$, $K(A)$, and can be used via the
construction of topological cyclic homology to obtain an approximation
of $K(A)$. Torus
homology of $A$, $\pi_*(A\otimes T^n)$ for the $n$-torus  
$T^n$, receives an $n$-fold iterated trace map from the
iterated algebraic K-theory of $A$. Ongoing work by Ausoni and Dundas
uses torus homology in order to make progress on the so-called red-shift
conjecture \cite{ausoni-rognes}. 

One strength of the construction of $A \otimes X$ is that it is
functorial in both $X$ and $A$, which allows us to study the homotopy
type of $A \otimes X$ by iteratively constructing $X$ out of
smaller spaces.  This iterative method is for instance heavily used in
Veen's work \cite{V} and in \cite{DLR}, \cite{BLPRZ}. As spheres are
the building blocks of CW complexes, 
the calculation of $\THH^{[n]}_*(A)$ is crucial for understanding
$\pi_*( \L_X(A))$ for CW-complexes $X$. The aim of this paper is to
develop new tools for the calculation of higher order topological
Hochschild homology by using the extra flexibility that is gained by a
relative approach. 

For a sequence of morphisms in the
category of commutative $S$-algebras $A \ra B \ra C$ and for a pair of
pointed simplicial
sets $(X, Y)$ we consider a relative version of the Loday
construction, $\L_{(X,Y)}(A, B; C)$: This relative version places $C$
over the basepoint,
$B$ over all points of $Y$ that are not the basepoint and $A$ over the
complement of $Y$ in $X$.

We define this relative Loday construction and show some of its
properties in Section \ref{sec:relloday}. Section \ref{sec:spsq}
exploits this relative structure and other geometric observations to
establish several weak equivalences (juggling formulae) that compare
higher $\THH$ groups
with respect to the sphere spectrum as the ground ring and those with respect to
other
commutative $S$-algebra spectra as ground rings, and compare higher
$\THH$-groups $\THH^{[m]}$ for $m = n$ and $n-1$. We use these
juggling formulae to
construct several spectral sequences for the calculation of higher
topological Hochschild homology and apply these spectral sequences to
obtain calculations for (higher)
topological Hochschild homology that were not known before. In Section
\ref{sec:ex} we determine for instance higher order relative $\THH$ of
certain Thom spectra and the higher order Shukla homology of
$\F_p$ with respect to pointed commutative $\F_p$-monoid algebras. We
gain additive results about $\THH^E(H\F_p)$ and $\THH^{[2]}(E; H\F_p)$
for $E = ko, ku, \mathit{tmf}$ at $p=2$ and $E=\ell$ the Adams
summand for $p$ an odd prime. Furthermore, we show
a splitting result for higher $\THH$ of the form $\THH^{[n], Hk}(HA)$,
where $k$ is any commutative ring and $A$ is any commutative
$k$-algebra.

In the following we work in the setting of \cite{EKMM} and we use the 
model structure on commutative $S$-algebras from
\cite[chapter VII]{EKMM}. Let $A$ be a commutative $S$-algebra. As the
category of commutative $A$-algebras is equivalent to the category of
commutative $S$-algebras under $A$, we obtain a model category
structure on the category of commutative $A$-algebras. In particular,
a commutative $A$-algebra $B$ is cofibrant if its unit map $A \ra B$
is a cofibration of commutative $S$-algebras.

\section{The relative Loday construction}
\label{sec:relloday}
Higher topological Hochschild homology of a commutative ring spectrum
$A$ is a special case  of the Loday
construction, or the internal
tensor product, which sends $A$ and a simplicial set $X$ to a
commutative simplicial ring spectrum $\call_X(A) = X \otimes A$, which
is a commutative augmented $A$-algebra.
This is a ring spectrum version of the Loday construction defined by
Pirashvili in \cite{P} for commutative rings, which sends a
commutative ring $R$ and a simplicial set $X$ to the
commutative augmented simplicial $R$-algebra $X \otimes R$. 

For a cofibrant commutative $S$-algebra  $A$  
this construction is homotopy invariant as a functor of $X$, that is: if
one works with homotopy equivalent simplicial sets, we get homotopy
equivalent augmented simplicial commutative $A$-algebras; 
in particular, this is true if one
works with two simplicial models for the same space.

Let $X$ be a pointed simplicial set. Since all boundary maps in a
pointed simplicial set send the basepoint to the basepoint, given an
$A$-module $C$ (and in
particular, a commutative $A$-algebra $C$) we can also study Loday
constructions with coefficients $\call_X(A; C)$ which replaces the
copy of $A$ over the base point by a copy of $C$.
We now define a relative version of this.

\begin{defn} \label{def:loday}
Let $A$ be a commutative $S$-algebra, $B$ a commutative $A$-algebra,
and $C$ a commutative $B$-algebra, with maps
$A\xrightarrow{f} B \xrightarrow{g} C$. Let $X$ be a pointed
simplicial set and $Y$ be a pointed simplicial subset. Then we can define
\[(\call_{(X,Y)}(A,B;C))_n := \left(\bigwedge_{X_n\setminus Y_n} A
\right) \wedge \left( \bigwedge_{Y_n \setminus \ast} B \right) \wedge
C. \]
We call this the \emph{$n$th simplicial degree of the relative Loday
construction of $A$ and $B$ with coefficients in $C$ on $(X,Y)$}.

The smash product here is usually taken over the sphere spectrum, but
can be done over any commutative ring spectrum, $k$. In this case, we
will add a superscript to the notation, $$\call^k_{(X,Y)}(A,B;C).$$
If $B=C$ then the simplicial subset $Y$ of $X$ does not have to be
pointed.

The structure maps of this construction use the fact that the smash
product is the coproduct  
in the category of commutative $S$-algebras and they  are given as follows:
Let $\varphi \in \Delta([m], [n])$ and let $\varphi^*$ denote the
induced map on $X$ and $Y$:
$$ \varphi^* \colon Y_n \ra Y_m, \quad \varphi^* \colon X_n \ra X_m.$$
Note that $X_n \setminus Y_n$ is not a subcomplex of $X_n$, so
$\varphi^*$ might send elements in here to $Y_m$.
We get an induced map
$$ \varphi^* \colon \left(\bigwedge_{X_n\setminus Y_n} A
\right) \wedge \left( \bigwedge_{Y_n \setminus \ast} B \right) \wedge
C \ra  \left(\bigwedge_{X_m\setminus Y_m} A
\right) \wedge \left( \bigwedge_{Y_m \setminus \ast} B \right) \wedge
C$$
by  first defining a map from
\begin{equation} \label{eq:smash}
\left(\bigwedge_{X_n\setminus Y_n} A
\right) \wedge \left( \bigwedge_{Y_n \setminus \ast} B \right) \wedge
C
\end{equation}
to a smash product over $X_n$ where some of the smash factors $A, B$ in
\eqref{eq:smash} are sent to $B$ or $C$:
\begin{itemize}
\item[]
\item
If for $x \in X_n\setminus Y_n$ the image $\varphi^*(x)$ is in
$Y_m \setminus *$,
then we apply the map $f$ on the corresponding smash factor $A$.
\item
If $x \in X_n\setminus Y_n$ is sent to the basepoint in $Y_m$ under
$\varphi^*$, then we use the composite $g \circ f$ on the
corresponding factor $A$.
\item
If for $x \in X_n\setminus Y_n$, the element $\varphi^*(x)$ is in
$X_m\setminus Y_m$, then we don't do anything, \ie, we apply the
identity map on the corresponding factor $A$.
\item
Similarly, if a $y \in Y_n \setminus *$ is sent to the basepoint, then
we apply $g$ to the corresponding $B$-factor.
\item
If $y \in Y_n \setminus *$ is sent to $Y_m \setminus *$, then we apply
the identity map to the corresponding $B$-factor.
\item
By assumption, the basepoint is sent to the basepoint, so the
$C$-factors are not involved in this process.
\end{itemize}
We now use the map $\varphi$ to obtain a map to
$\left(\bigwedge_{X_m\setminus Y_m} A
\right) \wedge \left( \bigwedge_{Y_m \setminus \ast} B \right) \wedge
C$:
\begin{itemize}
\item[]
\item
If an $x \in X_m \setminus Y_m$ has multiple preimages under
$\varphi^*$, then we use the multiplication on $A$ on the
corresponding smash factors.
\item
If a $y \in Y_m \setminus *$ has multiple preimages under
$\varphi^*$, then we use the multiplication on $B$ on the
corresponding smash factors.
\item
If the basepoint in $Y_m$ has multiple preimages, then we apply the
multiplication in $C$ on the corresponding factors.
\item
If some $x \in X_m \setminus Y_m$ is not in the image of $\varphi^*$,
then we insert the unit map of $A$ in the corresponding spots; similarly if
$y \in Y_m \setminus *$ has an empty preimage we use the unit map of
$B$. The basepoint always has a non-empty preimage.
\end{itemize}
As the multiplication maps on $A$, $B$ and $C$ are associative and
commutative and as the maps $f$ and $g$ are morphisms of commutative
$S$-algebras, this gives the relative Loday construction the structure
of a simplicial spectrum.
\end{defn}

\begin{lem}
The relative Loday construction, $\call_{(X,Y)}(A,B;C)_\bullet$, is
 a simplicial augmented commutative $C$-algebra spectrum.
\end{lem}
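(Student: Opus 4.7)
The plan is to unpack the statement into four claims and verify each, leaning on the fact that the smash product is the coproduct in commutative $S$-algebras and that $f,g$ are maps of commutative $S$-algebras.

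First I would observe that for each $n$, the spectrum $(\call_{(X,Y)}(A,B;C))_n$ is a commutative $C$-algebra: it is a (finite, when $X_n$ is finite; otherwise filtered colimit of finite) smash product of commutative $S$-algebras, which is the coproduct in that category, and it has $C$ as an explicit smash factor, so the unit of $C$ composed with the coproduct inclusion gives the $C$-algebra structure. This also fixes the commutativity and associativity needed for the multiplications used in the definition.

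Next I would argue that the structure maps $\varphi^*$ are morphisms of commutative $C$-algebras. The construction of $\varphi^*$ in Definition \ref{def:loday} factors as (i) applying $f$, $g$, $g\circ f$, or the identity on individual smash factors, which are $S$-algebra maps since $f$ and $g$ are, and (ii) multiplying or inserting units for smash factors with multiple or empty preimages, which is the universal property of the coproduct combined with the multiplication on $A$, $B$, $C$. Since the basepoint is preserved by $\varphi^*$, the single $C$-factor on the source maps to the single $C$-factor on the target by the identity, so $\varphi^*$ is a $C$-algebra map, and in particular $S$-algebra and module map.

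To get the simplicial identities, I would verify $(\psi\circ\varphi)^* = \varphi^* \circ \psi^*$. On each smash factor this reduces to one of the relations $\id\circ\id=\id$, $g\circ f = g\circ f$, $g\circ\id = g$, $\id\circ f = f$, $g\circ f = (g\circ f)$, etc., all of which hold strictly because $f$ and $g$ are fixed maps of commutative $S$-algebras; the multiplication/unit-insertion bookkeeping then matches because $\varphi$ and $\psi$ have been composed as maps of finite pointed sets and the associativity and commutativity of the multiplications make the iterated multiplications agree. I expect this combinatorial bookkeeping — tracking how an element can switch among $X\setminus Y$, $Y\setminus *$, and the basepoint under successive face/degeneracy maps — to be the main obstacle, but it is purely formal given the hypotheses.

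Finally I would define the augmentation $\epsilon\colon \call_{(X,Y)}(A,B;C)_\bullet \to C$ in each simplicial degree by applying $g\circ f$ on every $A$-factor, $g$ on every $B$-factor, and the identity on the $C$-factor, followed by the iterated multiplication in $C$; this is the unique map to $C$ produced by the coproduct description of the smash product. Compatibility of $\epsilon$ with the face and degeneracy maps is immediate from the same identities $g\circ f = g\circ f$ and associativity/commutativity of multiplication in $C$, so $\epsilon$ is a map of simplicial commutative $C$-algebras and realises $\call_{(X,Y)}(A,B;C)_\bullet$ as augmented over $C$.
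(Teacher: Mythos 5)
Your proposal is correct and follows essentially the same route as the paper: the paper's (one-line) proof likewise observes that the multiplication is defined coordinatewise from the multiplications of $A$, $B$, $C$ via the coproduct property of the smash product, and is therefore compatible with the simplicial structure maps already set up in Definition \ref{def:loday}. Your version simply spells out the details (levelwise $C$-algebra structure, the structure maps being $C$-algebra morphisms, the simplicial identities, and the augmentation) that the paper leaves implicit.
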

\begin{proof}
The multiplication
$$ \call_{(X,Y)}(A,B;C)_n \wedge_C \call_{(X,Y)}(A,B;C)_n \ra
\call_{(X,Y)}(A,B;C)_n$$
is defined coordinatewise and is therefore compatible with the
simplicial structure. Hence we obtain a simplicial commutative
augmented  $C$-algebra structure on
$\call_{(X,Y)}(A,B;C))_\bullet$.
\end{proof}
\begin{rem}
One can also define a version of the relative Loday construction if
$C$ is a $B$-module, rather than a commutative $B$-algebra. 
Then of course we cannot 
use the coproduct property of the smash product anymore, but we spelt out the 
structure maps explicitly, so that this generalization is not hard. 

In this
case $\call_{(X,Y)}(A,B;C))_\bullet$ is a simplicial $A$-module
spectrum and for the simplicial structure maps
one has to use the $B$-module structure of $C$. Note that the commutative
$S$-algebra $A$ acts on $C$ as well via the map $f \colon A \ra B$.
\end{rem}
\begin{ex}
As an explicit example of a pointed simplicial subcomplex we consider
$\partial \Delta_2 \subset \Delta_2$ whose basepoint $* \in \Delta([n],
[2])$ is the constant map with value $0$. Note that the number of
elements in $\Delta([n], [m])$ is $\binom{n+m+1}{n+1}$.

We describe the effect of the
maps $\varphi\colon [1] \ra [2]$, $\varphi(0)=0$, $\varphi(1) = 2$ and
$\psi \colon [2] \ra [1]$, $\psi(1)=\psi(0)=0$ and $\psi(2) = 1$.

\begin{center}
\setlength{\unitlength}{0.5cm}
\begin{picture}(13,2)(2,2)

\put(-2,0){$\varphi$}
\put(0,0){\vector(1,0){2}}
\put(0,1){\vector(2,1){2}}
\put(-0.5,-0.2){$0$}
\put(-0.5,0.8){$1$}
\put(2.5,-0.2){$0$,}
\put(2.5,0.8){$1$}
\put(2.5,1.8){$2$}
\put(6,0){$\psi$}
\put(8,0){\vector(1,0){2}}
\put(8,1){\vector(2,-1){2}}
\put(8,2){\vector(2,-1){2}}
\put(7.5,-0.2){$0$}
\put(7.5,0.8){$1$}
\put(10.5,-0.2){$0$}
\put(10.5,0.8){$1$}
\put(7.5,1.8){$2$}
\end{picture}

\end{center}
\vspace{1cm}
\bigskip

In $\L_{(\Delta_2, \partial \Delta_2)}(A, B; C)_2$ there is only one
copy of $A$ because
$$\Delta_2[2] \setminus \partial
\Delta_2([2]) = \Delta([2], [2]) \setminus \partial
\Delta_2([2]) = \{\id_{[2]}\}.$$
Thus
$$\L_{(\Delta_2, \partial \Delta_2)}(A, B; C)_2 = A \wedge \left(
  \bigwedge_{\partial 
\Delta_2([2]) \setminus \{\id_{[2]},*\}} B \right) \wedge C.$$
As $\partial \Delta_2[1] = \Delta_2[1]$ we get
$$\L_{(\Delta_2, \partial \Delta_2)}(A, B; C)_1 = \left(\bigwedge_{\partial
\Delta_2([1]) \setminus *} B\right) \wedge C.$$
The map $\psi^* \colon \Delta_2([1]) \ra \Delta_2([2])$ sends the six
elements of $\Delta_2([1])$ injectively to six elements in
$\Delta_2([2])$, so on the Loday construction we only use the unit
maps of $A$ and $B$ to fill
in the gaps. In particular, the identity of $[2]$ is not in the image
of $\psi$ and we get as $\psi^*$ on the Loday construction
$$ \xymatrix{{\left(\bigwedge_{\partial
\Delta_2([1]) \setminus *} B\right) \wedge C} \ar[r]^(0.3)\cong & S
\smash \wedge
\left(\bigwedge_{\Delta_2([2]) \setminus (\{\id_{[2]}\} \cup
  \text{im}(\psi^*))} S\right) \wedge \left(\bigwedge_{\text{im}(\psi^*)\setminus *}
B\right) \wedge C \ar[d]_{\eta_A \wedge
\left(\bigwedge _{\Delta_2([2]) \setminus (\{\id_{[2]}\} \cup
  \text{im}(\psi^*))} \eta_B\right) \wedge
\left(\bigwedge_{\text{im}(\psi^*)\setminus *} \id_B \right) \wedge \id_C} \\
 & A \wedge \left(\bigwedge_{\partial
\Delta_2([2]) \setminus \{\id_{[2]},*\}} B\right) \wedge C.}$$

In contrast to this, the map $\varphi^* \colon \Delta_2([2]) \ra
\Delta_2([1])$ is surjective. The preimage of the basepoint under
$\varphi^*$ is just the basepoint, so we get the identity on the
$C$-factor in $A \wedge \left(\bigwedge_{\partial
\Delta_2([2]) \setminus \{\id_{[2]},*\}} B\right) \wedge C$, but we have to
use the map $f\colon A \ra B$ and several instances of the
multiplication on $B$ to get to $\left(\bigwedge_{\partial
\Delta_2([1]) \setminus *} B\right) \wedge C$ because there is a fiber of
cardinality three and a fiber of cardinality two.
\end{ex}

\begin{ex} If in Definition \ref{def:loday} we take $A=B$, then
  $\L_{(X,Y)}(A,A;C) _\bullet = \L_X(A;C)_\bullet$. If in addition
  $A=C$ then we obtain $\L_{(X,Y)}(A,A;A) _\bullet= \L_X(A)_\bullet$.
\end{ex}
\begin{ex}
If we work relative to $A$, \ie, if we consider $\L^A_{(X,Y)}(A,B;C)_\bullet$,
then the $A$-factors disappear because we smash over $A$ and we get
$$ \L^A_{(X,Y)}(A,B;C)_\bullet \cong \L^A_{Y}(B;C)_\bullet. $$
\end{ex}

\begin{defn}
We define \emph{higher topological Hochschild homology of order $n$ of $A$ with
coefficients in $C$} by
$\THH^{[n],k}(A;C):= \call_{S^n}^k(A;C)_\bullet$. Here, $S^n$ is a pointed
simplicial model of the $n$-sphere.
\end{defn}

\begin{notation}
As above, if $k$ is the sphere spectrum
it is omitted from the notation. Similarly, if $n=1$ this may be
omitted also and written as $\THH^k(A;C)$. If $C=A$, we may write
$\THH^{[n],k}(A)$.
\end{notation}

\begin{prop} \label{prop:2.7}
\begin{enumerate}
\item[]
\item
For $A, B$, and as in Definition \ref{def:loday}, $C$ a commutative
$B$-algebra, $X$ a pointed
simplicial set and $Y$ a pointed simplicial subset, we get an
isomorphism of augmented simplicial commutative $C$-algebras
\begin{equation} \label{eqn:1}
\call_{(X,Y)}^k(A,B;C) _\bullet \cong \call_X^k(A;C) _\bullet
\wedge_{\call_Y^k(A;C)_\bullet } 
\call_Y^k(B;C)_\bullet.
\end{equation}
\item
For $X_0$ a common pointed simplicial subset of $X_1$ and $X_2$ and $Y_0$ a
common pointed simplicial subset of $Y_1$ and $Y_2$ so that $Y_i
\subseteq X_i$ for $i=1,2$ 
and $Y_0=X_0\cap Y_1\cap Y_2$, we have an
isomorphism of augmented simplicial $C$-algebras
\begin{equation} \label{eqn:2}
\call_{(X_1 \cup_{X_0} X_2, Y_1 \cup_{Y_0} Y_2)}^k(A,B;C) _\bullet\cong
\call_{(X_1,Y_1)}^k(A,B;C)_\bullet \wedge_{\call_{(X_0,Y_0)}^k(A,B;C)_\bullet}
\call_{(X_2,Y_2)}^k(A,B;C)_\bullet.
\end{equation}
\end{enumerate}
\end{prop}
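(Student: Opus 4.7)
The overall plan is to establish (1) by a degreewise computation that uses the coproduct property of the smash product, and then to deduce (2) from (1) together with the standard fact that the non-relative Loday construction $Z \mapsto \call^k_Z(D;E)$ sends pushouts of pointed simplicial sets to pushouts of commutative $E$-algebras (a consequence of $-\otimes D$ being a left adjoint).

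For part (1), fix a simplicial degree $n$ and use the decomposition $X_n \setminus \{*\} = (X_n \setminus Y_n) \sqcup (Y_n \setminus \{*\})$. Since the smash product is the coproduct of commutative $k$-algebras, this gives
\[
\call^k_X(A;C)_n \;\cong\; \Bigl(\bigwedge_{X_n\setminus Y_n} A\Bigr) \wedge \call^k_Y(A;C)_n.
\]
The map $\call^k_Y(A;C)_n \to \call^k_X(A;C)_n$ inserts units of $A$ in the $X_n\setminus Y_n$ slots, while $\call^k_Y(A;C)_n \to \call^k_Y(B;C)_n$ applies $f$ to each $A$-factor. Pushing out therefore cancels the common middle factor and leaves $\bigl(\bigwedge_{X_n\setminus Y_n} A\bigr) \wedge \bigl(\bigwedge_{Y_n\setminus\{*\}} B\bigr) \wedge C$, which is $\call^k_{(X,Y)}(A,B;C)_n$. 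To promote this to an isomorphism of simplicial objects, I would trace an arbitrary simplicial operator $\varphi$ through both sides and check that the case analysis in Definition~\ref{def:loday}, for when $f$, $g\circ f$, the identity, or a multiplication is applied, matches exactly the simplicial action of $\varphi^*$ on the pushout.

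For part (2), apply part (1) to rewrite every relative Loday construction appearing in \eqref{eqn:2} as a pushout of non-relative Loday constructions. The hypothesis $Y_0 = X_0 \cap Y_1 \cap Y_2$ guarantees that $Y := Y_1 \cup_{Y_0} Y_2$ embeds as a pointed simplicial subset of $X := X_1 \cup_{X_0} X_2$, so part (1) applies to the pair $(X,Y)$. Using the non-relative decomposition recalled above, one obtains
\[
\call^k_X(A;C) \cong \call^k_{X_1}(A;C)\wedge_{\call^k_{X_0}(A;C)} \call^k_{X_2}(A;C),
\]
and analogously for $\call^k_Y(A;C)$ and $\call^k_Y(B;C)$. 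Substituting these into the right-hand side of \eqref{eqn:1} presents $\call^k_{(X,Y)}(A,B;C)$ as the iterated colimit of a $3\times 3$ diagram of commutative $C$-algebras. Computing the colimit column-by-column produces the three relative Loday constructions $\call^k_{(X_i,Y_i)}(A,B;C)$ for $i=0,1,2$ by part (1), and their pushout is the right-hand side of \eqref{eqn:2}; by Fubini for colimits this coincides with the row-by-row computation, which is $\call^k_{(X,Y)}(A,B;C)$.

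The main obstacle is the simplicial bookkeeping in (1): the explicit rules in Definition~\ref{def:loday} for when $f$, $g$, the units of $A$ or $B$, or the multiplications on $A, B, C$ are applied must be matched exactly to the face and degeneracy actions produced by the pushout presentation. Once the degreewise isomorphism is upgraded to a simplicial one, part (2) is a purely formal manipulation of iterated pushouts and the only technical input is the monomorphism $Y \hookrightarrow X$ coming from the intersection hypothesis.
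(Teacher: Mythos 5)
Your proposal is correct and, for part (1), coincides with the paper's proof: both produce the degreewise isomorphism by identifying the coequalizer defining $\call_X^k(A;C)_n \wedge_{\call_Y^k(A;C)_n}\call_Y^k(B;C)_n$ with $\bigl(\bigwedge_{X_n\setminus Y_n}A\bigr)\wedge\bigl(\bigwedge_{Y_n\setminus\ast}B\bigr)\wedge C$, the key point being that the common factor $\bigl(\bigwedge_{Y_n\setminus\ast}A\bigr)\wedge C$ cancels against the base change along $f$; like the paper, you then only assert (rather than fully write out) the compatibility with the simplicial operators, which is the same level of detail. For part (2) you take a mildly different route: the paper writes down the levelwise isomorphism directly, using that $(X_1\cup_{X_0}X_2)_n\setminus(Y_1\cup_{Y_0}Y_2)_n$ decomposes compatibly with the pushout of simplicial sets, whereas you reduce everything to part (1) together with the non-relative pushout formula and then interchange the two pushouts by Fubini for colimits over the product of two span categories. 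Both arguments rest on the same combinatorial input; your version concentrates all the bookkeeping in part (1) and the non-relative statement at the cost of introducing the $3\times 3$ interchange (which is legitimate, since these are honest levelwise pushouts of commutative algebras and colimits over a product category may be computed in either order), and you correctly flag the one hypothesis that must be verified, namely that $Y_0=X_0\cap Y_1\cap Y_2$ guarantees $Y_1\cup_{Y_0}Y_2$ is a pointed simplicial subset of $X_1\cup_{X_0}X_2$ so that part (1) applies to the glued pair.
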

If $C=B$, then in both statements we can work in the unpointed
setting.
\begin{proof}
For the claim in \eqref{eqn:1} we have a levelwise
isomorphism of simplicial spectra 
\begin{equation*}
(\call_{X}^k(A;C))_n \wedge_{(\call_Y^k(A;C))_n}
(\call_Y^k(B;C))_n \cong (\call_{(X,Y)}^k(A,B;C))_n
\end{equation*}
given by the identification of coequalizers
$$
\left(\bigwedge_{X_n \setminus Y_n} A \right) \wedge \left(
  \bigwedge_{Y_n \setminus \ast} A \right) \wedge C
\bigwedge_{\left( \left(\bigwedge_{Y_n \setminus \ast} A \right)
    \wedge C \right)} \left( \bigwedge_{Y_n \setminus \ast} B
\right) \wedge C 
\cong \left(\bigwedge_{X_n \setminus Y_n} A \right) \wedge \left(
  \bigwedge_{Y_n \setminus \ast} B\right) \wedge C. 
$$

Similarly, for \eqref{eqn:2} we have a levelwise isomorphism of
simplicial spectra
$$
(\call_{(X_1,Y_1)}^k(A,B;C))_n
\wedge_{(\call_{(X_0,Y_0)}^k(A,B;C))_n} (\call_{(X_2,Y_2)}^k(A,B;C))_n
\cong (\call_{(X_1 \cup_{X_0} X_2, Y_1 \cup_{Y_0} Y_2)}^k(A,B;C))_n.
$$
Here we use that tensoring a commutative $S$-algebra with a pointed
simplicial set is compatible with pushouts of simplicial sets, hence
we get
\begin{multline*}
\left(\bigwedge_{(X_1)_n\setminus (Y_1)_n} A \wedge
  \bigwedge_{(Y_1)_n \setminus \ast} B \wedge C \right)
\bigwedge_{\left( \bigwedge_{(X_0)_n\setminus (Y_0)_n} A \wedge
    \bigwedge_{(Y_0) _n \setminus \ast} B \wedge C \right)} \left(
  \bigwedge_{(X_2)_n \setminus (Y_2)_n} A \wedge \bigwedge_{(Y_2)_n
    \setminus \ast} B \wedge C \right) \\ \cong
\bigwedge_{(X_1 \cup_{X_0} X_2)_n \setminus (Y_1 \cup_{Y_0} Y_2)_n} A
\wedge \bigwedge_{y\in (Y_1 \cup_{Y_0} Y_2)_n \setminus \ast} B \wedge C.
\end{multline*} 
\end{proof}

Let $A$ be an augmented commutative $C$-algebra, \ie, in addition to
the map $g \circ f \colon A \ra C$ we have a map $\eta \colon C \ra A$,
such that $g \circ f \circ \eta = \id_C$. In that case, we can identify
the relative Loday construction $\L^C_{(X,Y)}(A, C; C)_\bullet$ 
with the Loday construction of the quotient:
\begin{prop} \label{prop:relative-quotient}
Let $A$ be an augmented commutative $C$-algebra. Then there is an
isomorphism of simplicial augmented commutative $C$-algebras
\begin{equation} \label{eq:quotient}
\L^C_{(X,Y)}(A, C; C)_\bullet \cong \L^C_{X/Y}(A;C)_\bullet
\end{equation}
where $X/Y$ has the equivalence class of $Y$ as a basepoint.
\end{prop}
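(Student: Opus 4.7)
The plan is to verify the isomorphism levelwise and then check that the simplicial structure maps correspond.

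First I would simplify the left-hand side in each simplicial degree $n$. Because the smash product is taken over $C$, every factor of $C$ (whether coming from a point of $Y_n \setminus *$ or from the basepoint) is absorbed: iterated application of $C \wedge_C C \cong C$ yields
\[
\L^C_{(X,Y)}(A,C;C)_n \;\cong\; \bigwedge^C_{X_n \setminus Y_n} A.
\]
On the right-hand side, the basepoint of $X/Y$ is the equivalence class $[Y]$, and as a pointed set $(X/Y)_n = X_n/Y_n$ has $(X/Y)_n \setminus * \cong X_n \setminus Y_n$. Hence
\[
\L^C_{X/Y}(A;C)_n \;\cong\; \bigwedge^C_{(X/Y)_n \setminus *} A \;\cong\; \bigwedge^C_{X_n \setminus Y_n} A,
\]
so there is a canonical levelwise isomorphism $\Phi_n$ between the two sides induced by the identification of indexing sets.

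Next I would verify that $\Phi_\bullet$ is compatible with the simplicial structure maps. For $\varphi \in \Delta([m],[n])$, the quotient map $X \to X/Y$ is simplicial, so the induced map $\bar\varphi^* \colon (X/Y)_n \to (X/Y)_m$ satisfies: an $x \in X_n \setminus Y_n$ has $\bar\varphi^*(x) = *$ precisely when $\varphi^*(x) \in Y_m$ (whether or not it is the basepoint of $Y_m$). On the left-hand side, Definition \ref{def:loday} prescribes applying $f \colon A \to C$ in the first case (when $\varphi^*(x) \in Y_m \setminus *$) and $g \circ f$ in the second (when $\varphi^*(x) = *$); since here $B = C$ and $g = \id_C$, both reduce to the augmentation $f \colon A \to C$, which is exactly what is used on the right-hand side when $\bar\varphi^*(x) = *$. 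When $\varphi^*(x) \in X_m \setminus Y_m$, both sides apply $\id_A$. Multiplication in the fibers of $\varphi^*$ is handled in $A$ for points landing outside of $Y_m$ (resp.\ outside of $*$ in $X/Y$) and in $C$ otherwise, using the augmentation to pass from $A$ to $C$ before multiplying; again these agree. Finally, empty preimages are filled in using the unit $\eta \colon C \to A$ (which exists by the hypothesis that $A$ is augmented as a $C$-algebra, so $f \circ \eta = \id_C$), and this insertion is identical on both sides.

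The compatible multiplicative and augmentation structures are straightforward: both $\L^C_{(X,Y)}(A,C;C)_\bullet$ and $\L^C_{X/Y}(A;C)_\bullet$ have their product defined coordinatewise by multiplication in $A$ on factors indexed by $X_n \setminus Y_n$, and the augmentation is induced by $f \colon A \to C$ applied to every $A$-factor simultaneously; the isomorphism $\Phi_\bullet$ respects both.

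The only step requiring care is the verification that the basepoint-collapsing behavior of $\bar\varphi^*$ on $X/Y$ matches the combined effect of the two clauses in Definition \ref{def:loday} that apply $f$ (for $\varphi^*(x) \in Y_m \setminus *$) and $g \circ f$ (for $\varphi^*(x) = *$). This is precisely where the hypothesis $B = C$ is essential, because it forces $g = \id_C$ and collapses the two clauses into the single augmentation map; without this, the relative construction genuinely records more information than the quotient, and no such identification would exist.
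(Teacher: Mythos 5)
Your proof is correct, but it takes a different route from the paper's. The paper invokes Proposition \ref{prop:2.7}(a) to rewrite $\L^C_{(X,Y)}(A,C;C)_\bullet$ as the pushout $\L^C_X(A;C)_\bullet \wedge_{\L^C_Y(A;C)_\bullet} \L^C_Y(C;C)_\bullet$, observes that $\L^C_Y(C;C)_\bullet$ is the constant simplicial object $C_\bullet$, and then identifies $\L^C_X(A;C)_\bullet \wedge_{\L^C_Y(A;C)_\bullet} C_\bullet$ with $\L^C_{X/Y}(A;C)_\bullet$ --- that last identification being asserted rather than spelled out. You instead verify the isomorphism by hand: in each simplicial degree both sides collapse (via $C\wedge_C C\cong C$) to $\bigwedge^C_{X_n\setminus Y_n}A$, and you check that the face/degeneracy maps, multiplications over fibers, unit insertions, and augmentations agree under the identification $X_n\setminus Y_n\cong (X/Y)_n\setminus *$. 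Your approach is more elementary and actually supplies the detail that the paper leaves implicit (that collapsing $Y$ in the base-changed Loday construction really does give the Loday construction on $X/Y$); the paper's approach is shorter and displays the conceptual content (the relative construction as a pushout) more clearly. Your closing observation correctly isolates where $B=C$ is used: the two clauses of Definition \ref{def:loday} applying $f$ and $g\circ f$ coincide exactly when $g=\id_C$, which is what makes the quotient description possible. One small point worth stating explicitly is the degenerate case $X_n\setminus Y_n=\emptyset$, where both sides are the empty smash product over $C$, namely $C$ itself; this causes no trouble but completes the levelwise check.
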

\begin{proof}
We use Proposition \ref{prop:2.7} above and obtain that
$$ \L^C_{(X,Y)}(A, C; C)_\bullet  \cong \L^C_{X}(A; C)_\bullet
\wedge_{\L^C_{Y}(A; C)_\bullet} \L^C_{Y}(C; C)_\bullet$$
but $\L^C_{Y}(C; C)_\bullet$ is isomorphic to the constant simplicial
object $C_\bullet$ with $C$ in every simplicial degree. Thus
$$\L^C_{(X,Y)}(A, C; C)_\bullet  \cong \L^C_{X}(A; C)_\bullet
\wedge_{\L^C_{Y}(A; C)_\bullet} C_\bullet \cong \L^C_{X/Y}(A;
C)_\bullet \wedge_C C \cong \L^C_{X/Y}(A;
C)_\bullet  $$
as claimed.
\end{proof}

Proposition \ref{prop:relative-quotient} immediately gives rise to the
following spectral sequence.
\begin{prop}
If $C$ is a cofibrant commutative $S$-algebra and $A$ is a cofibrant commutative augmented $C$-algebra and if $Y$ is a pointed
simplicial subset of $X$,
then there is a spectral sequence
$$ E^2_{s,t} = \Tor_{s,t}^{\pi_*(\L^C_Y(A; C)_\bullet)}(\pi_*(\L^C_X(A; C)_\bullet),
 \pi_*C) \Rightarrow \pi_*(\L^C_{X/Y}(A; C)_\bullet).$$
\end{prop}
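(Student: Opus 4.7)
The plan is to combine Proposition \ref{prop:relative-quotient} with the standard Künneth spectral sequence for modules over a commutative $S$-algebra (as developed in \cite{EKMM}, chapter IV). Specifically, Proposition \ref{prop:relative-quotient} identifies $\L^C_{X/Y}(A;C)_\bullet$ with $\L^C_X(A;C)_\bullet \wedge_{\L^C_Y(A;C)_\bullet} C$, so the problem reduces to producing a Tor spectral sequence for a derived smash product over the commutative $S$-algebra $\L^C_Y(A;C)_\bullet$.

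First I would observe that the inclusion $Y \hookrightarrow X$ makes $\L^C_X(A;C)_\bullet$ into a commutative algebra over $R := \L^C_Y(A;C)_\bullet$, and the augmentation $A \to C$ makes $C$ into a commutative $R$-algebra via the composite $R \to \L^C_Y(C;C)_\bullet \cong C$. Thus both smash factors in \eqref{eq:quotient} are commutative $R$-algebras, and the smash product over $R$ is computed as a coequalizer in commutative $R$-algebras. Next I would verify the cofibrancy hypotheses needed to interpret $\wedge_R$ as a derived smash product: since $C$ is a cofibrant commutative $S$-algebra and $A \to C$ is a cofibration of commutative $C$-algebras (because $A$ is cofibrant as an augmented commutative $C$-algebra), the Loday constructions $\L^C_X(A;C)_\bullet$ and $\L^C_Y(A;C)_\bullet$ are built out of iterated cofibrant smash powers and hence have the cofibrancy needed for the EKMM Künneth spectral sequence to apply. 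If extra cofibrant replacement is needed, one can replace the map $\L^C_Y(A;C)_\bullet \to \L^C_X(A;C)_\bullet$ by a cofibration of commutative $R$-algebras, which does not change the homotopy type of the pushout.

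Once this setup is in place, I would invoke the Künneth spectral sequence of \cite[Theorem IV.4.1]{EKMM}: for a commutative $S$-algebra $R$ and $R$-modules $M$, $N$ (here $R = \L^C_Y(A;C)_\bullet$, $M = \L^C_X(A;C)_\bullet$, $N = C$), there is a natural strongly convergent spectral sequence
\[
E^2_{s,t} = \Tor^{\pi_*R}_{s,t}(\pi_*M, \pi_*N) \Longrightarrow \pi_{s+t}(M \wedge_R N).
\]
Substituting and using Proposition \ref{prop:relative-quotient} to identify the abutment with $\pi_*(\L^C_{X/Y}(A;C)_\bullet)$ gives the claimed spectral sequence.

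The main obstacle is purely technical: ensuring that the commutative $S$-algebras in play are sufficiently cofibrant so that the point-set level smash product computed in Proposition \ref{prop:relative-quotient} agrees with the derived smash product used in the Künneth spectral sequence. This is handled by the standard observation that Loday constructions of cofibrant commutative $S$-algebras are homotopy invariant and produce cofibrant objects in the appropriate module categories, so no further modification of the identification in \eqref{eq:quotient} is required.
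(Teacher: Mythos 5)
Your proposal is correct and follows essentially the same route as the paper: identify $\L^C_{X/Y}(A;C)_\bullet$ with $\L^C_X(A;C)_\bullet \wedge_{\L^C_Y(A;C)_\bullet} C$ via Proposition \ref{prop:relative-quotient}, check that the cofibrancy hypotheses let this point-set smash product represent the derived one, and apply the EKMM K\"unneth spectral sequence. The paper's proof is just a terser version of the same argument, leaving the cofibrancy verification implicit.
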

\begin{proof}
The isomorphism from Proposition \ref{prop:relative-quotient}
$$ \L^C_X(A; C)_\bullet \wedge_{\L^C_Y(A; C)_\bullet} C \cong \L^C_{X/Y}(A; C)_\bullet$$
induces a weak equivalence
$$ \L^C_X(A; C)_\bullet \wedge^L_{\L^C_Y(A; C)_\bullet} C \sim \L^C_{X/Y}(A; C)_\bullet$$
and we get the associated K\"unneth spectral sequence.
\end{proof}

\section{Spectral sequences with the relative Loday construction}
\label{sec:spsq}
In this section we set up some spectral sequences. Let $S$ be the
sphere spectrum and let $R$ be a
commutative $S$-algebra. Unadorned smash products will be over $S$.
We first recall some properties of the category of commutative $R$-algebras:
The category of commutative $R$-algebras is a topological model
category (\cite[VII.4.10]{EKMM}).
This implies that it is tensored over the category of unbased spaces
 and that for every sequence of  cofibrations $R \to A \to B$ of 
 commutative $S$-algebras and every relative
 CW-complex $(L,K)$ the map
 \[(A \otimes L) \wedge_{(A \otimes K)} (B \otimes K) \to B \otimes L  \]
 is a cofibration. For a  simplicial finite set  $X$ and and
 commutative $R$-algebra $A$ there is
 a natural isomorphism (see \cite[VII.3.2]{EKMM}):
 \begin{eqnarray*}
|\mathcal{L}^R_{X}(A)_\bullet| \cong A \otimes |X|.
\end{eqnarray*}
We define the Loday construction $\mathcal{L}^R_{|X|}(A)$ as  
$A \otimes |X|$. 
We get a similar definition in the pointed setting and we can define
the relative Loday construction for a pair of pointed CW complexes  
$Y \subset X$ and a sequence 
of maps of commutative $S$-algebras $R \ra A \ra B \ra C$  using  
Proposition \ref{prop:2.7} (a) as 
\begin{equation} \label{eq:defLodayspaces} 
\L^R_{(X, Y)}(A, B; C) := \L^R_X(A; C) \wedge^L_{\L^R_Y(A; C)} \L^R_Y(B; C). 
\end{equation}
\begin{thm}
Let $A$ be a cofibrant commutative $S$-algebra, and let
$B$ be a cofibrant commutative $A$-algebra. There is an 
equivalence of augmented commutative $B$-algebras
$$ \call_{(D^n, S^{n-1})}(A,B) \simeq \THH^{[n-1],A}(B)$$
for all $n$.
\end{thm}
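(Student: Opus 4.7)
The plan is to combine Proposition~\ref{prop:2.7}(a) with the contractibility of $D^n$ and two standard base-change identities for Loday constructions, reducing the left-hand side to $\L^A_{S^{n-1}}(B)=\THH^{[n-1],A}(B)$.

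First I would apply Proposition~\ref{prop:2.7}(a) with $C=B$, $X=D^n$ and $Y=S^{n-1}$ to obtain
\[\L_{(D^n,S^{n-1})}(A,B)\simeq \L_{D^n}(A;B)\wedge^L_{\L_{S^{n-1}}(A;B)}\L_{S^{n-1}}(B).\]
Since $D^n$ is contractible to its basepoint, homotopy invariance of the Loday construction in its space argument (valid because $A$ is a cofibrant commutative $S$-algebra and $B$ is a cofibrant commutative $A$-algebra) gives $\L_{D^n}(A;B)\simeq \L_{\ast}(A;B)\cong B$. Hence
\[\L_{(D^n,S^{n-1})}(A,B)\simeq B\wedge^L_{\L_{S^{n-1}}(A;B)}\L_{S^{n-1}}(B).\]

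Now set $R:=\L_{S^{n-1}}(A)$ and $M:=\L_{S^{n-1}}(B)$. Unwinding coefficients gives $\L_{S^{n-1}}(A;B)\simeq R\wedge^L_A B$, where $A\ra R$ is the basepoint inclusion and $R\wedge^L_A B\ra B$ pairs the augmentation $R\ra A$ (collapse $S^{n-1}\ra \ast$) with $\id_B$. Moreover, the levelwise isomorphism $B^{\wedge k}\wedge_{A^{\wedge k}}A\cong B^{\wedge_A k}$, applied with $k=|S^{n-1}_n|$ in each simplicial degree, yields the base-change formula
\[\L^A_{S^{n-1}}(B)\simeq M\wedge^L_R A.\]
Thus the theorem reduces to the equivalence
\[B\wedge^L_{R\wedge^L_A B}M\simeq M\wedge^L_R A.\]

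To prove this last equivalence I would paste pushout squares. The square with corners $A,\,R,\,B,\,R\wedge^L_A B$ is the defining pushout of $R\wedge^L_A B$ in commutative $S$-algebras. Since the basepoint inclusion $A\ra R$ is split by the augmentation $R\ra A$, pasting along this retraction identifies the structure map $R\wedge^L_A B\ra B$ with the augmentation of $R\wedge^L_A B$. Pasting instead along $R\ra M$ identifies the pushout of the cospan $B\leftarrow R\wedge^L_A B\ra M$ with the pushout of the cospan $A\leftarrow R\ra M$, i.e.\ with $M\wedge^L_R A$. The main technical obstacle is this pushout-pasting step: one must verify throughout that each $\wedge$ computes the derived smash (guaranteed by the cofibrancy hypotheses together with the standard fact that Loday constructions of cofibrant algebras remain cofibrant over their base), and that the two maps out of $R\wedge^L_A B$ into $B$ and $M$ are precisely the maps produced by the pasting. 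Once this bookkeeping is done, chaining the equivalences gives the theorem.
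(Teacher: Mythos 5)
Your proof is essentially correct, but it takes a genuinely different route from the paper. The paper argues by induction on $n$: the base case $\call_{(D^1,S^0)}(A,B)$ is the two-sided bar construction $B(B,A,B)\simeq B\wedge^L_A B=\THH^{[0],A}(B)$, and the inductive step decomposes $(D^{n+1},S^n)$ into two half-disks glued along $(D^n,S^{n-1})$ via Proposition~\ref{prop:2.7}(b), yielding $B\wedge^L_{\call_{(D^n,S^{n-1})}(A,B)}B$, which is identified with $\THH^{[n],A}(B)$ by Veen's bar-construction description of higher $\THH$. You instead give a direct, non-inductive argument: Proposition~\ref{prop:2.7}(a) plus contractibility of $D^n$ reduces the left side to $B\wedge^L_{R\wedge^L_A B}M$ with $R=\L_{S^{n-1}}(A)$, $M=\L_{S^{n-1}}(B)$, and a pushout-pasting (using that the square with corners $R$, $R\wedge_A B$, $A$, $B$ is itself a pushout, since $A\wedge_R(R\wedge_A B)\cong B$) identifies this with $M\wedge^L_R A$, which is $\THH^{[n-1],A}(B)$ by the base-change identity. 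Your pasting step checks out: the two legs out of $R\wedge^L_A B$ really are the augmentation $\epsilon\wedge\id$ into $B$ and the map restricting to $\L_{S^{n-1}}(f)$ on $R$, so the composite rectangle exhibits $B\wedge^L_{R\wedge^L_A B}M$ as the pushout of $A\leftarrow R\to M$. Two caveats: (i) the base-change identity $\L^A_{S^{n-1}}(B)\simeq M\wedge^L_R A$ is exactly Theorem~\ref{thm:red-over}(a), which the paper proves \emph{after} this theorem; its proof is independent, so there is no circularity, but you are effectively front-loading that result (including the cofibrancy arguments showing $A\to R$ and $R\to M$ are cofibrations, which are needed to know your point-set pushouts compute the homotopy pushouts); (ii) you correctly flag, but do not carry out, this derived-versus-point-set bookkeeping, which is where most of the actual work in the paper's Theorem~\ref{thm:red-over} and Lemma~\ref{hp} lives. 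What your approach buys is the avoidance of both the induction and the appeal to Veen's theorem; what the paper's approach buys is that it needs only the gluing formula of Proposition~\ref{prop:2.7}(b) and homotopy invariance, with no base-change machinery.
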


\begin{proof}
We proceed by induction on $n$. For $n=1$,  $\call_{(D^1, S^{0})}(A,B)$ 
is the two-sided bar construction $B(B, A, B)$ which is a model for $B
\wedge^L_A B$. As we assumed $B$ to be a cofibrant commutative
$A$-algebra  $B \wedge^L_A B$ is weakly
equivalent to $B \wedge_A B$ which is $\THH^{[0],A}(B)$.
For the inductive step, we assume that
$\call_{(D^n,S^{n-1})}(A,B) \simeq \THH^{[n-1],A}(B)$. By \cite{V}, we
know that $\THH^{[n],A}(B)$ is weakly equivalent to the bar
construction $B^A(B,\THH^{[n-1],A}(B), B)$ by decomposing the
$n$-sphere into two hemispheres glued along an $(n-1)$-sphere.

We also know that $\call_{(D^{n+1}, S^n)}(A,B)$ can be built from two
half-disks of dimension $n+1$, part of whose boundary (the outside
edge) has $B$'s over it, and the other part (the $n$-disk we glue
along) has $A$'s over it. So by \eqref{eqn:2},

\begin{equation*}
\begin{split}
\call_{(D^{n+1}, S^n)}(A,B) &= \call_{(D^{n+1} \bigcup_{D^n} D^{n+1},
  D^n \bigcup_{S^{n-1}} D^n)}(A,B) \\
& \cong \call_{(D^{n+1},D^n)}(A,B) \wedge_{\call_{(D^n,
    S^{n-1})}(A,B)} \call_{(D^{n+1}, D^n)}(A,B).
\end{split}
\end{equation*}

For example, when $n=1$ we have

\[\call_{(D^{2}, S^1)}(A,B) \simeq \call_{\left(
\begin{adjustbox}{valign=c, scale=0.5}
\begin{tikzpicture}
\draw (0,0) -- (0,1);
\draw (0,1) arc (90:270:0.5);
\fill[opacity=0.4] (0,0) -- (0,1) arc (90:270:0.5);
\node[circle, minimum width=3pt, fill, inner sep=0pt,draw] at (0,0){};
\node[circle, minimum width=3pt, fill, inner sep=0pt,draw] at (0,1){};
\end{tikzpicture} \textbf{,} \, 
\end{adjustbox}
\begin{adjustbox}{valign=c,scale=0.5}
\begin{tikzpicture}
\draw (0,1) arc (90:270:0.5);
\node[circle, minimum width=3pt, fill, inner sep=0pt,draw] at (0,0){};
\node[circle, minimum width=3pt, fill, inner sep=0pt,draw] at (0,1){};
\end{tikzpicture}
\end{adjustbox}
\right)}(A,B) \wedge_{\call_{\left(
\begin{adjustbox}{valign=c,scale=0.5}
\begin{tikzpicture}
\draw (0,0) -- (0,1);
\end{tikzpicture} \textbf{,} \, 
\end{adjustbox}
\begin{adjustbox}{valign=c,scale=0.5}
\begin{tikzpicture}
\node[circle, minimum width=3pt, fill, inner sep=0pt,draw] at (0,0){};
\node[circle, minimum width=3pt, fill, inner sep=0pt,draw] at (0,1){};
\end{tikzpicture}
\end{adjustbox}
\right)}(A,B)} \call_{\left(
\begin{adjustbox}{valign=c,scale=0.5}
\begin{tikzpicture}
\draw (0,0) -- (0,1);
\draw (0,1) arc (90:-90:0.5);
\fill[opacity=0.4] (0,0) -- (0,1) arc (90:-90:0.5);
\node[circle, minimum width=3pt, fill, inner sep=0pt,draw] at (0,0){};
\node[circle, minimum width=3pt, fill, inner sep=0pt,draw] at (0,1){};
\end{tikzpicture} \textbf{,} \,
\end{adjustbox}
\begin{adjustbox}{valign=c,scale=0.5}
\begin{tikzpicture}
\draw (0,1) arc (90:-90:0.5);
\node[circle, minimum width=3pt, fill, inner sep=0pt,draw] at (0,0){};
\node[circle, minimum width=3pt, fill, inner sep=0pt,draw] at (0,1){};
\end{tikzpicture}
\end{adjustbox}
\right)}(A,B) \]

So we have
\begin{align*}
\call_{(D^{n+1},S^n)}(A,B)
	& \cong \call_{(D^{n+1}, D^n)}(A,B) \wedge_{\call_{(D^n,
            S^{n-1})}(A,B)} \call_{(D^{n+1},D^n)}(A,B)  	& 	\\
	& \simeq \call_{(\ast,\ast)}(A,B) \wedge^L_{\call_{(D^n,
            S^{n-1})}(A,B)} \call_{(\ast,\ast)}(A,B) 	&
        \text{(by homotopy invariance)}\\
	& \simeq B \wedge^L_{\call_{(D^n, S^{n-1})}(A,B)} B 		&	\\
	& \simeq B^A(B,\call_{(D^n, S^{n-1})}(A,B), B) 	&	\\
	& \simeq B^A(B, \THH^{[n-1],A}(B),B) 		&
        \text{(by assumption)}\\
	& \simeq \THH^{[n],A}(B).	&	\text{(by \cite{V})}
\end{align*}
\end{proof}

Let $C$ be a commutative $R$-algebra. Let $\mathcal{C}_{R/C}$ and
$\mathcal{C}_{C/C}$ denote the categories of commutative $R$-algebras
 over $C$ and of commutative $C$-algebras over $C$. Let $\mathcal{T}$
 denote the category of based spaces.
 We then have a functor
 \[\botimes_C \colon \mathcal{C}_{R/C} \times \mathcal{T} \to
 \mathcal{C}_{C/C}\]
 defined by $(A,X) \mapsto A \botimes_C X:= (A \otimes X) \wedge_A
 C$. Here, the map $A \to A \otimes X$ is given by the composition of
 the isomorphism $A \cong A \otimes {*}$ with the map induced by the
 inclusion of the basepoint.
 The augmentation $A \botimes_C X \to C$ is  given by
 \[(A \otimes X) \wedge_A C \to (A \otimes {*}) \otimes_A C \cong C.\]
 We have a natural homeomorphism
 \[\mathcal{C}_{C/C}(A \botimes_C X, B) \cong \mathcal{T}(X,
 \mathcal{C}_{R/C}(A, B)).\]
 Let $D \to E$ be a map in $\mathcal{C}_{R/C}$ such that the
 underlying map of commutative $R$-algebras is a cofibration. Let $K
 \to L$ be an inclusion of based spaces such that $(L,K)$ is a
 relative $CW$-complex. Then the natural map
 \[ (D \botimes_C L) \wedge_{(D \botimes_C K)} (E \botimes_C K) \to E
 \botimes_C L \]
 is a cofibration of commutative $R$-algebras. For $A \in
 \mathcal{C}_{R/C}$ and a simplicial  finite pointed set $X$, we have
 a natural isomorphism of $C$-algebras over $C$:
 \[ |\mathcal{L}^R_{(X,*)}(A; C)_\bullet| \cong A \botimes_C |X|.  \]
 \begin{thm} \label{thm:red-over}
 Let  $S \to A \to B \to C$ be a
 sequence of cofibrations of commutative $S$-algebras. Then
 \begin{enumerate}
  \item  $\THH^{[n], A}(B) \cong A \wedge_{\THH^{[n]}(A)}
    \THH^{[n]}(B)$ and
  \item  $\THH^{[n], A}(B; C ) \cong C \wedge_{\THH^{[n]}(A; C)}
    \THH^{[n]}(B;C)$.
 \end{enumerate}
In both cases, the smash product represents the derived smash
product. 
\end{thm}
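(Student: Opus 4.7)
The plan is to reduce both parts to a level-wise algebraic identity and then geometrically realize. The key input will be the purely formal isomorphism, valid for any sequence $S\to A\to B$ of commutative $S$-algebras and any positive integer $k$,
\[B^{\wedge_A k} \cong A \wedge_{A^{\wedge k}} B^{\wedge k},\]
where $A^{\wedge k}\to A$ is iterated multiplication and $A^{\wedge k}\to B^{\wedge k}$ is the $k$-fold smash of $f\colon A\to B$; this is exactly the definition of smash over $A$ as the pushout that identifies the $k$ separate $A$-actions with the single one coming from multiplication. Smashing both sides with $C$ over $A$ and using associativity together with $A\wedge_A C\cong C$ yields the coefficient version
\[B^{\wedge_A k}\wedge_A C \cong C\wedge_{A^{\wedge k}\wedge C}(B^{\wedge k}\wedge C).\]

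Next, I would apply these identities at each simplicial degree, with $k=|(S^n)_q|$ for part~(a) and $k=|(S^n)_q\setminus\{*\}|$ for part~(b), to obtain isomorphisms of simplicial augmented commutative $A$-algebras
\[\call^A_{S^n}(B)_\bullet\cong A\wedge_{\call_{S^n}(A)_\bullet}\call_{S^n}(B)_\bullet,\qquad \call^A_{S^n}(B;C)_\bullet \cong C\wedge_{\call_{S^n}(A;C)_\bullet}\call_{S^n}(B;C)_\bullet,\]
where $A$ (resp.\ $C$) is viewed as a constant simplicial $\call_{S^n}(A)$-algebra (resp.\ $\call_{S^n}(A;C)$-algebra) via the augmentation induced by the collapse $S^n\to *$. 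Face and degeneracy maps respect these identifications by naturality in the underlying pointed set. Taking geometric realization, which commutes with smash products over a commutative $S$-algebra since both are computed as colimits, will then produce the two stated isomorphisms.

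The main obstacle will be ensuring that these point-set smash products represent the derived smash products. This is where the cofibrancy hypothesis on the sequence $S\to A\to B\to C$ and the model-categorical facts recalled at the start of Section~\ref{sec:spsq}---in particular that $(A\otimes L)\wedge_{(A\otimes K)}(B\otimes K)\to B\otimes L$ is a cofibration for every relative $CW$-complex $(L,K)$---come into play: they guarantee that $\call_{S^n}(A)$ and $\call_{S^n}(A;C)$ are sufficiently cofibrant that the level-wise identifications above compute the derived smash products, which is exactly what is asserted in the last sentence of the theorem.
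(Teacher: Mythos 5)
Your construction of the point-set isomorphism is exactly the paper's: the coequalizer identity $B^{\wedge_A k}\cong A\wedge_{A^{\wedge k}}B^{\wedge k}$ and its coefficient variant $B^{\wedge_A k}\wedge_A C\cong C\wedge_{(A^{\wedge k}\wedge C)}(B^{\wedge k}\wedge C)$, applied in each simplicial degree, assembled into an isomorphism of simplicial commutative $S$-algebras, and then geometrically realized. That part is correct and matches the paper's argument step for step.

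The gap is in your final paragraph. Asserting that $\THH^{[n]}(A)$ and $\THH^{[n]}(A;C)$ are ``sufficiently cofibrant that the level-wise identifications compute the derived smash products'' is precisely the statement that needs proof, and the naive reading of it is false: in $A\wedge_{\THH^{[n]}(A)}\THH^{[n]}(B)$ the factor $A$ carries its $\THH^{[n]}(A)$-algebra structure via the \emph{augmentation} $\THH^{[n]}(A)\to A$, which is not a cofibration, so $A$ is not cofibrant over $\THH^{[n]}(A)$ and cofibrancy of the two Loday constructions alone does not make the point-set pushout derived. What the paper actually does is: (i) use the pushout-product property you cite, together with cobase-change arguments (e.g.\ identifying $A\otimes *\cong(S\otimes S^n)\wedge_{(S\otimes *)}(A\otimes *)$), to show that $S\to\THH^{[n]}(A)$ and $\THH^{[n]}(A)\to\THH^{[n]}(B)$ are cofibrations of commutative $S$-algebras; (ii) invoke \cite[VII.7.4]{EKMM} to conclude that $-\wedge_{\THH^{[n]}(A)}\THH^{[n]}(B)$ preserves weak equivalences between cofibrant commutative $S$-algebras; and (iii) factor the augmentation as $\THH^{[n]}(A)\rightarrowtail\tilde A\stackrel{\sim}{\twoheadrightarrow}A$ and compare $\tilde A\wedge_{\THH^{[n]}(A)}\THH^{[n]}(B)$, which represents the derived smash product by \cite[VII.6.7]{EKMM}, with $A\wedge_{\THH^{[n]}(A)}\THH^{[n]}(B)$. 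Some version of (ii)--(iii) is indispensable; without it the last sentence of the theorem is unproved. The analogous points for part (b) use that $A\botimes_C *\to B\botimes_C *$ identifies with the identity of $C$, so that $\THH^{[n]}(A;C)\to\THH^{[n]}(B;C)$ is again a cofibration.
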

\begin{proof}
 In order to show (a) we first prove that
 \[A \wedge_{\THH^{[n]}(A)} \THH^{[n]}(B)\]
 represents the derived smash product of $A$ and $\THH^{[n]}(B)$ over
 $\THH^{[n]}(A)$.
 For this we first show that $\THH^{[n]}(A)$ is a cofibrant
 commutative $S$-algebra: Since $A$ is a  cofibrant commutative
 $S$-algebra, it suffices to show that the unit $A \to \THH^{[n]}(A)$
 or equivalently that the map $A \otimes * \to A \otimes S^n$ is a
 cofibration of commutatative $S$-algebras. By the properties listed
 above
  the map
 \[(S \otimes S^n) \wedge_{(S \otimes *)} (A \otimes *) \to (A \otimes S^n)\]
 is a cofibration. The map $ S \otimes * \to S \otimes  S^n$ is an
 isomorphism because both sides can be identified with $S$.
 We get that
 \[(A \otimes *) \cong (S \otimes S^n) \wedge_{(S \otimes *)} (A
 \otimes *).\] Thus, $\THH^{[n]}(A)$ is cofibrant.
We now show that $\THH^{[n]}(A)  \to \THH^{[n]}(B)$ is a cofibration
of commutative $S$-algebras. For this it suffices to show that $A
\otimes S^n \to B \otimes S^n$ is a cofibration.
Since $A \to B$ is a cofibration, the map
\[ (A \otimes S^n) \otimes_{(A \otimes *)} (B \otimes *)  \to (B \otimes S^n)  \]
is a cofibration. The map $A \otimes * \to B \otimes *$ is a
cofibration because it can be identified with the map $A \to
B$. Because cofibrations are stable under cobase change the map
\[(A \otimes S^n) \to (A \otimes S^n) \otimes_{(A \otimes *)} (B
\otimes *) \] is a cofibration. Thus $A \otimes S^n \to B \otimes S^n$
is  a cofibration.
 Because  $\THH^{[n]}(A) \to \THH^{[n]}(B)$ is a cofibration between
 cofibrant commutative $S$-algebras,  we get by \cite[VII.7.4]{EKMM}
 that the functor
\[- \wedge_{\THH^{[n]}(A)} \THH^{[n]}(B) \]
preserves weak equivalences between cofibrant commutative
$S$-algebras. We factor the map $\THH^{[n]}(A) \to A$ as a cofibration
followed by an acyclic fibration
\[
\THH^{[n]}(A)   \rightarrowtail \tilde{A}  \stackrel{\sim}{\twoheadrightarrow} A
\]
and obtain a weak equivalence
\[\tilde{A} \wedge_{\THH^{[n]}(A)} \THH^{[n]}(B) \stackrel{\sim}{\to}
A \wedge_{\THH^{[n]}(A)} \THH^{[n]}(B).\]
By \cite[VII.6.7]{EKMM} the $S$-algebra
\[\tilde{A} \wedge_{\THH^{[n]}(A)} \THH^{[n]}(B)\]
represents the derived smash product of $A$ and $\THH^{[n]}(B)$ over
$\THH^{[n]}(A)$.

 We now show that there is an isomorphism of commutative $S$-algebras
 \[\THH^{[n], A}(B) \cong A \wedge_{\THH^{[n]}(A)} \THH^{[n]}(B).\]
 We start with the isomorphism of commutative  $S$-algebras
 \begin{eqnarray*}
  A \wedge_{\THH^{[n]}(A)} \THH^{[n]}(B) &\cong &
  |\mathcal{L}_{*}(A)_{\bullet}|
  \wedge_{|\mathcal{L}_{S^n}(A)_{\bullet}|}
  |\mathcal{L}_{S^n}(B)_{\bullet}| \\
  & \cong & |\mathcal{L}_{*}(A)_{\bullet}
  \wedge_{\mathcal{L}_{S^n}(A)_{\bullet}}
  \mathcal{L}_{S^n}(B)_{\bullet}|.
 \end{eqnarray*}
By a comparison of coequalizer diagrams we have,  for all $n$, isomorphisms of
commutative $S$-algebras:
 \[ A \wedge_{A^{\wedge n}} (B^{\wedge n}) \cong \underbrace{B \wedge_A \ldots
 \wedge_A B}_{n}\]
and these induce an isomorphism of simplicial
 commutative $S$-algebras
\[\mathcal{L}_{*}(A)_{\bullet}
\wedge_{\mathcal{L}_{S^n}(A)_{\bullet}}
\mathcal{L}_{S^n}(B)_{\bullet} \cong
\mathcal{L}^A_{S^n}(B)_{\bullet}.\]
This proves part (a) of the theorem.

 We now prove part (b). We again first show that
 \[  C \wedge_{\THH^{[n]}(A; C)} \THH^{[n]}(B;C)  \]
represents the derived smash product of $C$ and  $\THH^{[n]}(B;C)$
over $\THH^{[n]}(A; C)$.
For this it suffices to show that $\THH^{[n]}(A; C)$ is a cofibrant
commutative $S$-algebra and that the  map $\THH^{[n]}(A; C) \to
\THH^{[n]}(B; C)$ is a cofibration of commutative $S$-algebras.
The morphism $C \to \THH^{[n]}(A;C)$ is a cofibration because
\[C \to A \botimes_C S^n = (A \otimes S^n) \wedge_A C\]
 is a cofibration. Thus $\THH^{[n]}(A;C)$ is cofibrant.
The map $\THH^{[n]}(A;C) \to \THH^{[n]}(B;C)$ is a cofibration because
$A \botimes_C S^n \to B \botimes_C S^n$
can be written as
\[A \botimes_C S^n \to (A \botimes_C S^n) \wedge_{(A \botimes_C *)} (B
\botimes_C *) \rightarrowtail B \botimes_C S^n.\]
The first map of the composition is an isomorphism, because the map $A
\botimes_C * \to B \botimes_C  *$ identifies with the identity of $C$.

It remains to prove that there is an isomorphism of commutative $S$-algebras
\[  \THH^{[n], A}(B; C) \cong C \wedge_{\THH^{[n]}(A; C)} \THH^{[n]}(B;C). \]
This follows as above by using that we have an isomorphism of
commutative $S$-algebras
\[   C \wedge_{(A^{\wedge n} \wedge C)} (B^{\wedge n} \wedge C ) \cong
B^{\wedge_A n}  C\]
for all $n \geq 0$.
\end{proof}
\begin{rem}
The proof shows that Theorem \ref{thm:red-over} also holds for general
finite pointed simplicial sets $X$ and a sequence of cofibrations of
commutative $S$-algebras $S \to A \to B \to C$, giving us isomorphisms 
\begin{enumerate}
\item 
$\L^A_{|X|}(B) \cong A \wedge_{\L_{|X|}(A)} \L_{|X|}(B)$ and 
\item
$\L^A_{|X|}(B; C) \cong C \wedge_{\L_{|X|}(A; C)} \L_{|X|}(B; C)$. 
\end{enumerate}
\end{rem}
\begin{rem}
It is known that for
topological Hochschild homology, there is a difference between Galois
descent  and \'etale descent: John Rognes \cite{Rognes} 
developed the notion of Galois extensions for commutative $S$-algebras
and showed that for a Galois extension $A \ra B$ with finite Galois
group $G$ the canonical map $B \ra \THH^A(B)$ is a weak
equivalence \cite[Lemma 9.2.6]{Rognes}. Akhil Mathew \cite{Mathew}
provided an example of such a Galois
extension that does \emph{not} satisfy \'etale descent, \ie, the
pushout map
$$ B \wedge_A \THH(A) \ra \THH(B)$$
is not a weak equivalence. Theorem \ref{thm:red-over} doesn't
contradict this. We take a finite Galois extension $A \ra B$. Then we
obtain a weak equivalences
$$ B \ra \THH^A(B) \cong A \wedge_{\THH(A)} \THH(B).$$
But if we then smash this equivalence with $\THH(A)$ over $A$ the
resulting equivalence
\begin{equation} \label{eq:chain}
B \wedge_A \THH(A) \simeq (A \wedge_{\THH(A)} \THH(B)) \wedge_A
\THH(A)
\end{equation}
cannot be reduced to the statement that $B \wedge_A \THH(A)$ is
equivalent to $\THH(B)$: On the right hand side of
\eqref{eq:chain} we cannot reduce the $\THH(A)$-term because in the
smash product we use the augmentation map $\THH(A) \ra A$ and its
composite with the unit is not equivalent to the identity map.
\end{rem}

Let $R$ be a commutative $S$-algebra, and
$\mathcal{C}_R$  the category of commutative $R$-algebras.  Let $D$ be
the category $\{b \longleftarrow a 
\longrightarrow c\}$. Then the category ${}^D\mathcal{C}_R$
of functors from $D$ to $\mathcal{C}_R$ admits a
model category structure, where the weak equivalences
(resp. fibrations) are the maps that are objectwise weak equivalences
(resp. fibrations).
We have a cofibrant replacement functor ${}^D\mathcal{C}_R \to
{}^D\mathcal{C}_R$.
The homotopy pushout $B \twedge_A C$ of  a diagram $B
\longleftarrow A \longrightarrow C$ in $\mathcal{C}_R$
is constructed by taking the chosen  cofibrant replacement $B'
\longleftarrow A' \longrightarrow C'$  of the diagram and then taking
the usual pushout $B' \wedge_{A'} C'$.
One gets a functor
\[ (-) \twedge_{(-)}(-)\colon {}^D\mathcal{C}_R \to \mathcal{C}_R.\]
This functor sends  weak equivalences to weak equivalences. There is natural map
\[ B \twedge_A C \to B \wedge_A C\] which is a weak equivalence
when $A$ is cofibrant and $A \to B$ and $A \to C$
are cofibrations.
If $A$ is cofibrant, then $ B \twedge_A C$ is equivalent to the
derived smash product $B \wedge^L_A C$ of $B$ and $C$ over $A$.
One can show:
\begin{lem} \label{hp}
For a commutative diagram
\[\xymatrix{
E & D \ar[r] \ar[l] & F \\
B \ar[d] \ar[u] & A \ar[r] \ar[l] \ar[d] \ar[u] & C \ar[d] \ar[u] \\
H & G \ar[r] \ar[l] & I
}\]
in $\mathcal{C}_R$
there is a zig-zag of weak equivalences
\[  (E \twedge_D F)  \twedge_{(B \twedge_A C)}
 (H \twedge_G I) \sim (E \twedge_B H)
\twedge_{(D \twedge_A G)}  (F \twedge_CI) \]
over $(E \wedge_B H)  \wedge_{(D \wedge_A G)}  (F \wedge_C I)$
where
\[  (E \twedge_D F)  \twedge_{(B \twedge_A C)}
 (H \twedge_G I) \to  (E \wedge_B H)  \wedge_{(D \wedge_A
  G)}  (F \wedge_C I) \] is given by the composition of
the morphism
\[ (E \twedge_D F)   \twedge_{(B \twedge_A C)}
  (H \twedge_G I) \to   (E \wedge_D F)   \twedge_{(B
  \wedge_A C)}  (H \wedge_G I)
\to (E \wedge_D F)  \wedge_{(B \wedge_A C)}   (H \wedge_G I) \]
with the standard isomorphism
 \[ (E \wedge_D F)  {\wedge}_{(B \wedge_A C)}  (H \wedge_G I)
 \cong (E \wedge_B H)  \wedge_{(D \wedge_A G)}  (F \wedge_C I)  \]
and
\[(E \twedge_B H)  \twedge_{(D \twedge_A G)}
(F \twedge_CI)  \to (E \wedge_B H)   \wedge_{(D \wedge_A G)}
 (F \wedge_C I)\]
is given by
\[(E \twedge_B H)  \twedge_{(D \twedge_A G)}
(F \twedge_C I) \to (E \wedge_B H)   \twedge_{(D
  \wedge_A G)}  (F \wedge_C I)
 \to (E \wedge_B H)  \wedge_{(D \wedge_A G)}  (F \wedge_C I).\]

\end{lem}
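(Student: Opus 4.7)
The plan is to reduce the comparison of the two iterated homotopy pushouts to a Fubini-style isomorphism between two iterated \emph{strict} pushouts, once we replace the whole $3\times 3$-diagram by a single cofibrant object. Let $J$ be the indexing category of the diagram, regarded as a direct Reedy category with center $A$ in degree $0$, edge midpoints $B,C,D,G$ in degree $1$, and corners $E,F,H,I$ in degree $2$. Equip $\mathcal{C}_R^J$ with the projective (equivalently, for a direct category, Reedy) model structure and take a cofibrant replacement $X'\to X$ of the given diagram. Cofibrancy forces every object of $X'$ to be cofibrant, every structural arrow to be a cofibration, and each corner latching map $B'\wedge_{A'} D'\to E'$ (and its analogues at $F',H',I'$) to be a cofibration in $\mathcal{C}_R$.

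With this setup, every inner pushout that appears in either iterated construction on $X'$ can be computed strictly. Each of the row pushouts $E'\wedge_{D'} F'$, $B'\wedge_{A'} C'$, $H'\wedge_{G'} I'$ is a genuine homotopy pushout, since the two arrows out of its center are cofibrations between cofibrant objects; the same is true column-wise. A short cobase-change argument, using stability of cofibrations along pushouts, shows that after row pushouts the resulting $\{b\leftarrow a\to c\}$-diagram is again projectively cofibrant, so its pushout is again a homotopy pushout; the same holds after column pushouts. By associativity of colimits over $J$, both iterated strict pushouts coincide canonically with $\colim_J X'$, so one has the standard isomorphism
\[(E'\wedge_{D'} F')\wedge_{(B'\wedge_{A'} C')}(H'\wedge_{G'} I') \cong (E'\wedge_{B'} H')\wedge_{(D'\wedge_{A'} G')}(F'\wedge_{C'} I').\]

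To conclude, I use that $\twedge$ takes objectwise weak equivalences of diagrams to weak equivalences, so $X'\to X$ induces weak equivalences on each of the two iterated homotopy pushouts appearing in the statement; and that on the cofibrant diagram $X'$ each natural transformation $\twedge\to\wedge$ is a weak equivalence at every stage. Concatenating these zig-zags with the Fubini isomorphism above yields the required zig-zag between the two iterated homotopy pushouts of $X$. Compatibility over the strict object $(E\wedge_B H)\wedge_{(D\wedge_A G)}(F\wedge_C I)$ is automatic: the map $X'\to X$ is natural, hence it induces $\colim_J X'\to \colim_J X$ compatibly with either iterated presentation; and the maps $\twedge\to\wedge$ are themselves natural, so the full zig-zag sits above the strict iterated pushout of $X$ exactly through the composites described in the lemma.

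The main obstacle is verifying that Reedy/projective cofibrancy really does survive a round of row or column pushouts, so that the second-stage iterated pushout on $X'$ is again strict-equals-homotopy. I expect this to reduce to repeated applications of the pushout-product property together with stability of cofibrations under cobase change, but it is the one place where care is needed. Any approach that tries to cofibrantly replace only individual rows or columns (rather than the whole $3\times 3$-diagram at once) would be forced to carry substantially more bookkeeping in order to produce a canonical zig-zag over the prescribed strict target.
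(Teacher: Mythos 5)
The paper itself offers no proof of this lemma (it is prefaced only by ``One can show''), so there is no argument of record to compare yours against; I can only assess your proposal on its own terms, and it is essentially correct. Your reduction — cofibrantly replace the whole $3\times 3$ diagram in the Reedy (= projective, for this direct indexing category) model structure on $\mathcal{C}_R^{D\times D}$ with $D=\{b\leftarrow a\to c\}$, observe that on the replacement both iterated strict pushouts are iterated homotopy pushouts and agree with $\colim_{D\times D}$ by Fubini, and then transport back along objectwise equivalences and the comparison maps $\twedge\to\wedge$ — is a clean route, and it uses exactly the two properties of $\twedge$ that the paper records (preservation of objectwise weak equivalences, and $\twedge\to\wedge$ being an equivalence on cofibrant spans). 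The one step you flag as delicate does close, and the cleanest way to see it is not by hand via pushout-products but by exponentiating: a Reedy-cofibrant object of $\mathcal{C}_R^{D\times D}$ is the same as a Reedy-cofibrant object of $(\mathcal{C}_R^{D})^{D}$ (a cofibrant ``span of cofibrant spans''), and $\colim_D\colon \mathcal{C}_R^{D}\to\mathcal{C}_R$ is left Quillen for the Reedy structure; applying it row-wise therefore yields a Reedy-cofibrant span $E'\wedge_{D'}F'\leftarrow B'\wedge_{A'}C'\to H'\wedge_{G'}I'$, i.e.\ the legs are cofibrations between cofibrant objects and the outer strict pushout is again a homotopy pushout (symmetrically for columns). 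With that in place, your concluding claim that the compatibility over $(E\wedge_B H)\wedge_{(D\wedge_A G)}(F\wedge_C I)$ is automatic is fair: every arrow in your zig-zag is a component of a natural transformation, and the Fubini isomorphism is natural in the $3\times3$ diagram, so all the relevant squares commute. If you write this up, do spell out the left-Quillen/exponential-law step and the naturality squares explicitly, since those are the only points where a reader could doubt the argument.
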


\begin{thm} \label{thm:eva2nd}
Let $S \to A \to B \to C$ be a sequence of cofibrations of commutative
$S$-algebras. Then
\[\THH^{[n]}(B; C) \sim  \THH^{[n]}(A;C) \wedge^{L}_{\THH^{[n-1],
    A}(C)} \THH^{[n-1],B}(C).\]
\end{thm}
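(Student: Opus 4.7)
The plan is to apply Lemma \ref{hp} to a $3\times 3$ diagram in $\mathcal{C}_S$ chosen so that the iterated homotopy pushout taken row-by-row assembles to the right-hand side of the statement, while the iterated homotopy pushout taken column-by-column collapses to the left-hand side. By the lemma the two agree up to a zig-zag of weak equivalences, which gives the desired equivalence.

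The diagram I would use has top row $C\leftarrow \THH^{[n-1]}(A;C)\to C$, middle row $C\leftarrow \THH^{[n-1]}(A;C)\to \THH^{[n-1]}(C)$, and bottom row $C\leftarrow \THH^{[n-1]}(B;C)\to \THH^{[n-1]}(C)$. The leftward horizontal maps are all augmentations, obtained by collapsing $S^{n-1}$ to its basepoint together with the appropriate ring map into $C$; the rightward horizontal maps in the middle and bottom rows are induced functorially by $A\to C$ and $B\to C$ respectively, and in the top row it is again the augmentation. The left column consists of identity maps of $C$; the middle column has the identity $\THH^{[n-1]}(A;C)\to\THH^{[n-1]}(A;C)$ in one arm and the map $\THH^{[n-1]}(A;C)\to\THH^{[n-1]}(B;C)$ induced by $A\to B$ in the other; the right column has the augmentation $\THH^{[n-1]}(C)\to C$ in one arm and the identity of $\THH^{[n-1]}(C)$ in the other. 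Commutativity of the nine resulting squares reduces to functoriality of $\L_{S^{n-1}}(-;C)$ together with the factorization $A\to B\to C$.

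Identifying the pushouts then goes as follows. The top row is the hemisphere decomposition $S^n=D^n_+\cup_{S^{n-1}} D^n_-$ for the pointed Loday construction with coefficients in $C$: by Proposition \ref{prop:2.7}(b) and the fact that $\L_{D^n}(A;C)\simeq C$ (because $D^n$ is pointed-contractible), its homotopy pushout is $\THH^{[n]}(A;C)$. The middle and bottom rows reproduce the formula
\[\THH^{[n-1],R}(C)\;\simeq\; C\wedge^L_{\THH^{[n-1]}(R;C)}\THH^{[n-1]}(C)\]
(for $R=A$ and $R=B$) that is an instance of the Remark following Theorem \ref{thm:red-over} applied with the ring $C$ playing the role of $B$ in that statement. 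Assembling the three row-pushouts produces exactly $\THH^{[n]}(A;C)\wedge^L_{\THH^{[n-1],A}(C)}\THH^{[n-1],B}(C)$, the right-hand side. The column computation is nearly trivial: each column contains an identity in one of its two arms, so the left and right columns collapse to $C$ and the middle column collapses to $\THH^{[n-1]}(B;C)$; one further hemisphere decomposition turns $C\wedge^L_{\THH^{[n-1]}(B;C)}C$ into $\THH^{[n]}(B;C)$, the left-hand side.

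The main obstacle will be the bookkeeping around cofibrancy: Lemma \ref{hp} uses a cofibrant-replacement functor and the construction $\twedge$, so at each use I must check that the relevant maps are cofibrations of commutative $S$-algebras and that the identifications of $\twedge$ with the $\wedge^L$ appearing in the statements we want to match are valid. This is exactly what the hypothesis that $S\to A\to B\to C$ is a sequence of cofibrations is meant to provide, and the cofibrancy arguments used in the proof of Theorem \ref{thm:red-over} (together with stability of cofibrations under cobase change and tensoring with a CW pair) carry them over to the nine entries of our diagram.
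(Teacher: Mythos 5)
Your argument is correct, and it takes a genuinely different route from the paper's. The paper proves the theorem by induction on $n$: it introduces auxiliary iterated homotopy pushout functors $T^{[n],D}(E)$ (with $T^{[0],D}(E)=E\twedge_D E$ and $T^{[n+1],D}(E)=E\twedge_{T^{[n],D}(E)}E$), identifies $\THH^{[n],A}(C)\sim T^{[n],A}(C)$ and $\THH^{[n]}(A;C)\sim T^{[n-1],C\wedge_S A}(C)$, reduces the statement to $T^{[n],C\wedge_S B}(C)\sim T^{[n],C\wedge_S A}(C)\twedge_{T^{[n],A}(C)}T^{[n],B}(C)$, and then applies Lemma \ref{hp} once in the base case and once in each inductive step, tracking throughout that the zig-zags are equivalences over $C$ so the induction can continue. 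You instead apply Lemma \ref{hp} a single time at level $n$, using Theorem \ref{thm:red-over}(b) for the sequences $S\to A\to C\to C$ and $S\to B\to C\to C$ (both legitimately sequences of cofibrations, since $A\to C$ and $B\to C$ are composites of cofibrations and identities are cofibrations) to identify the middle and bottom row pushouts with $\THH^{[n-1],A}(C)$ and $\THH^{[n-1],B}(C)$, and the hemisphere decomposition to identify the top row pushout and the final column pushout with $\THH^{[n]}(A;C)$ and $\THH^{[n]}(B;C)$. This is shorter and avoids the induction entirely, at the price of leaning on Theorem \ref{thm:red-over} as a black box and of having to verify that the three row identifications are compatible with the vertical maps between the rows, so that they assemble into an equivalence of spans and hence of the outer homotopy pushout; this is the analogue of the ``over $C$'' bookkeeping in the paper, and you correctly flag it. The cofibrancy of $\THH^{[n-1]}(A;C)$ and $\THH^{[n-1]}(B;C)$ and of the map between them, established in the proof of Theorem \ref{thm:red-over}, is exactly what lets you replace $\twedge$ by $\wedge^L$ where needed. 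Both proofs rest on the same interchange lemma; yours uses it once where the paper uses it at every stage of an induction.
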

\begin{proof}
We work in the model category of commutative $S$-algebras.
For a map of commutative $S$-algebras $D \to E$ we define commutative
$S$-algebras $T^{[n],D}(E)$ augmented over $E$ inductively as follows:
Let $T^{[0],D}(E)$ be $E \twedge_D E$ and let $T^{[0],D}(E) \to E$ be defined by
\[E \twedge_D E \to E \wedge_D E\to E.\]
Set $T^{[n+1],D}(E) := E \twedge_{T^{[n],D}(E)} E $
and define $T^{[n+1],D}(E) \to E$ by
\[  E \twedge_{T^{[n],D}(E)} E \to E \wedge_{T^{[n],D}(E)} E \to E.\]
The $T^{[n],(-)}(E)$  are then endofunctors on the category of
commutative $S$-algebras over $E$. 
Using the decomposition $S^n = D^n \cup_{S^{n-1}} D^n$, one can show
that there are zig-zags of weak equivalences over $C$ (compare
\cite{V}) 
\begin{eqnarray*}
 \THH^{[n],A}(C) & \sim & C \twedge_{\THH^{[n-1],A}(C)} C \\
   \THH^{[n]}(A;C) & \sim & C \twedge_{\THH^{[n-1]}(A;C)} C. \\
\end{eqnarray*}
With that it follows that there are equivalences over $C$
\begin{eqnarray*}
 \THH^{[n],A}(C) & \sim &  T^{[n],A}(C) \\
 \THH^{[n]}(A;C) & \sim & T^{[n-1],C \wedge_S A}(C). \\
\end{eqnarray*}
The same is true for  $B$ instead of $A$.

It thus suffices to show:
\[T ^{[n], C \wedge_S B}(C) \sim T^{[n], C \wedge_S A}(C)
\twedge_{T^{[n], A}(C)}  T^{[n],B}(C).\]
We prove by induction on $n$ that these $S$-algebras are equivalent
via a zig-zag of weak equivalences over $C$ where the augmentation of
the right-hand side is given by
\[ T^{[n], C \wedge_S A}(C)  \twedge_{T^{[n], A}(C)}  T^{[n],B}(C)
\to  T^{[n], C \wedge_S A}(C)  {\wedge}_{T^{[n], A}(C)}
T^{[n],B}(C) \to C.   \]
We have an isomorphism $T^{[0], C \wedge_S B}(C) \cong T^{[0], (C
  \wedge_S A) \wedge_A B}(C)$  over $C$.
Because of the cofibrancy assumptions the map
\[(C \wedge_S A)  \twedge_A  B \xrightarrow{} (C \wedge_S A) \wedge_A B\]
is a weak equivalence. It induces a weak equivalence
\[( C \twedge_C C)  \twedge_{((C \wedge_S A) \twedge_A B)}  (C
\twedge_C C) \xrightarrow{\sim}
C  \twedge_{((C \wedge_S A) \wedge_A B)}  C = T^{[0], (C \wedge_S A) \wedge_A B}(C).\]
This is a map over $C$ if we endow the left-hand side with the augmentation
\[ ( C \twedge_C C)  \twedge_{((C \wedge_S A) \twedge_A B)}  (C
\twedge_C C) \to ( C \twedge_C C) \wedge_{((C \wedge_S A) \twedge_A
  B)} (C \twedge_C C) \to C  \twedge_C  C \to C. \]
By Lemma \ref{hp}, we have an equivalence
\[ ( C \twedge_C C)  \twedge_{((C \wedge_S A) \twedge_A B)}  (C
\twedge_C C) \sim (C \twedge_{(C \wedge_S A)} C) \twedge_{(C
  \twedge_A C)}  (C \twedge_B C)\]
and the right-hand side is equal to
  $T^{[0], C \wedge_S A}(C)  \twedge_{T^{[0],A}(C)}  T^{[0],
    B}(C)$.   The compatibility of the equivalence   with the isomorphism
  \[  ( C {\wedge}_C C)  {\wedge}_{((C \wedge_S A) {\wedge}_A B)}
  (C {\wedge}_C C) \cong (C {\wedge}_{(C \wedge_S A)} C)
  {\wedge}_{(C {\wedge}_A C)}  (C {\wedge}_B C)\]
  implies that it is an equivalence  over $C$.

  We now assume that the claim is true for $n$. Set $T' = T^{[n], C
    \wedge_S A}(C)  \twedge_{T^{[n], A}(C)}  T^{[n],B}(C)$. By
  induction hypothesis we have
  \[ T^{[n+1], C \wedge_S B}(C) = C  \twedge_{T^{[n],C \wedge_S
      B}(C)}  C \sim C \twedge_{T'}  C.\]
  via a zig-zag of weak equivalences over $C$.
 We have a weak equivalence
 \[(C \twedge_C C)  \twedge_{T'}  (C \twedge_C C) \xrightarrow{\sim}
 C   \twedge_{T'}  C.\]
 It is a map over $C$ if we endow the right-hand side with the augmentation
 \[(C \twedge_C C)  \twedge_{T'}   (C \twedge_C C) \to (C
 \twedge_C C) {\wedge}_{T'}   (C \twedge_C C) \to C  \twedge_C
   C \to C.  \]
 By Lemma \ref{hp} we have an equivalence
 \[  (C \twedge_C C)  \twedge_{T'}  (C \twedge_C C) \sim (C
 \twedge_{T^{[n], C \wedge_S A}(C)}  C) \twedge_{(C
   \twedge_{T^{[n],A}(C)} C)}  (C \twedge_{T^{[n],B}(C)} C)    \]
 and the right-hand side is equal to $T^{[n+1], C \wedge_S A}(C)
 \twedge_{T^{[n+1],A}(C)}  T^{[n+1], B}(C)$.
 Because of the compatibility with the isomorphism
 \begin{eqnarray*}
 & &  (C \wedge_C C)  {\wedge}_{(T^{[n], C\wedge_S A}(C)
   \wedge_{T^{[n], A}(C)} T^{[n],B}(C))}  (C {\wedge}_C C) \\
  & \cong &  (C {\wedge}_{T^{[n], C \wedge_S A}(C)} C)  {\wedge}_{(C
    {\wedge}_{T^{[n],A}(C)} C)}  (C {\wedge}_{T^{[n],B}(C)} C)
  \end{eqnarray*}
 it is an equivalence over $C$. This shows the induction step.
\end{proof}

\section{Applications}
\label{sec:ex}
\subsection{Thom spectra}
\begin{ex}
Christian Schlichtkrull \cite{schlichtkrull} gives a general formula
for the Loday construction on Thom spectra $\L_X(T(f); M)$ where
$f\colon A \rightarrow BF_{h\mathcal{I}}$ is
an $E_\infty$-map, $A$ is a grouplike $E_\infty$-space, and
$BF_{h\mathcal{I}}$ is a model for $BF=BGL_1(S)$, the classifying
space for stable spherical fibrations. The commutative $S$-algebra
$T(f)$ is the associated Thom spectrum for $f$ and $M$ is any
$T(f)$-module.

If we set $B=C$ in Theorem \ref{thm:red-over}, then we obtain
\begin{equation} \label{eq:juggling} \THH^A(B) \simeq B
  \wedge^L_{\THH^{[n]}(A; B)} \THH^{[n]}(B)
\end{equation}
so if there is a factorization as follows
$$\xymatrix{
A \ar[r]^f \ar[d]_h &  BF_{h\mathcal{I}} \\
B \ar[ru]_g
}$$
such that $h$ is a map of grouplike $E_\infty$-spaces, then we get an
induced map of commutative $S$-algebras $T(f) \ra T(g)$.

For $X$ a sphere and $M = T(g)$, we obtain \cite[Theorem 1]{schlichtkrull}
$$ \THH^{[n]}(T(f); T(g)) \simeq T(g) \wedge \Omega^\infty(S^n
\wedge \mathbb{A})_+$$
where $\mathbb{A}$ denotes the spectrum associated to $A$ such that
the map from $A$ to the underlying infinite loop space of
$\mathbb{A}$, $\Omega^\infty \mathbb{A}$, is a weak equivalence. 

Our juggling formula \eqref{eq:juggling} gives a formula for
higher $\THH$ of $T(g)$ as a commutative $T(f)$-algebra:
\begin{align*}
\THH^{[n], T(f)}(T(g)) & \simeq T(g) \wedge^L_{\THH^{[n]}(T(f);
  T(g))} \THH^{[n]}(T(g)) \\
& \simeq T(g) \wedge^L_{T(g) \wedge
  \Omega^\infty(S^n
\wedge \mathbb{A})_+} T(g) \wedge \Omega^\infty(S^n
\wedge \mathbb{B})_+.
\end{align*}
Important examples of such factorizations are listed for instance in
\cite[section
3]{Beardsley}. For example we can consider $BSU \ra BU$, $BU \ra BSO$
or $B\text{String} \ra B\text{Spin}$ to get $\THH^{[n], MSU}(MU)$,
$\THH^{[n], MU}(MSO)$ or $\THH^{[n],
  M\text{String}}(M\text{Spin})$. As these examples give rise to
Hopf-Galois extensions of ring spectra (see \cite{Rognes}) but not Galois
extensions, the above relative $\THH$-terms will be non-trivial.
\end{ex}

\subsection{$\THH^{[n], HA}(H\F_p)$ for  commutative pointed
  $\F_p$-monoid algebras $A$}
Hesselholt and Madsen \cite[Theorem 7.1]{HM} showed a splitting result
for topological Hochschild homology of pointed monoid rings. There is a
straightforward generalization of this splitting result to higher
order topological Hochschild homology in the commutative case.
Let $\Pi$ be a discrete pointed commutative monoid,
\ie, a commutative monoid in the category of based spaces with smash product.
Assume moreover that $\Pi$ is augmented, that is: admits a map of pointed
monoids to the pointed monoid $\{1, *\}$, where $1$ is the unit  and $*$
the base point.  As long as $1\neq *$ in $\Pi$, there always is such a
map: we can 
send all invertible elements in the monoid to $1$ and all the rest to
$*$.  In general, however, 
such an augmentation is not unique, so it needs to be part of the data.  We
consider the monoid algebra where the ground ring is the field
$\F_p$ and all unadorned tensor products are understood to be over
$\F_p$. Hesselholt-Madsen define
$\F_p[\Pi]$ as the $\F_p$ vector space with basis $\Pi$ modulo the
subspace generated by the basepoint.

The analogue of \cite[Theorem 7.1]{HM} is a splitting of augmented commutative 
$H\F_p$-algebras: 
\begin{equation} \label{eq:splitting}
\THH^{[n]}(H\F_p[\Pi]) \cong \THH^{[n]}(H\F_p) \wedge_{H\F_p}
\THH^{[n], H\F_p}(H\F_p[\Pi]). 
\end{equation}
Note that $\pi_*(\THH^{[n], H\F_p}(H\F_p[\Pi])) \cong
\HH_*^{\F_p, [n]}(\F_p[\Pi])$.

\begin{thm} \label{thm:Shuklamonoidalgs}
For any $\F_p$-algebra $\mathbb{F}_p[\Pi]$ on an augmented
commutative pointed monoid
$\Pi$ with $* \neq 1$ there is a 
weak equivalence 
$$ \THH^{[n], H\F_p[\Pi]}(H\F_p) \simeq H\F_p \wedge^L_{\THH^{[n],
    H\F_p}(H\F_p[\Pi])} H\F_p[\Pi]$$ 
of commutative augmented $H\F_p$-algebras. 
\end{thm}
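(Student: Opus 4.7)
The plan is a two-step reduction: first trade the relative $\THH$ for a derived smash product over $\THH^{[n]}(H\F_p[\Pi])$, then plug in the splitting \eqref{eq:splitting} and rearrange. First I would apply Theorem \ref{thm:red-over}(a) to the sequence of (cofibrantly replaced) cofibrations $S \to H\F_p[\Pi] \to H\F_p$, the second map being the augmentation $\epsilon \colon H\F_p[\Pi] \to H\F_p$ induced by the augmentation of $\Pi$. This yields
\[ \THH^{[n], H\F_p[\Pi]}(H\F_p) \simeq H\F_p[\Pi] \wedge^L_{\THH^{[n]}(H\F_p[\Pi])} \THH^{[n]}(H\F_p), \]
where the map $\THH^{[n]}(H\F_p[\Pi]) \to H\F_p[\Pi]$ is the augmentation coming from $S^n \to *$ and the map $\THH^{[n]}(H\F_p[\Pi]) \to \THH^{[n]}(H\F_p)$ is functorially induced by $\epsilon$.

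Next I would substitute the splitting \eqref{eq:splitting} into the middle term. Writing $P := \THH^{[n]}(H\F_p)$ and $R := \THH^{[n], H\F_p}(H\F_p[\Pi])$, this identifies $\THH^{[n]}(H\F_p[\Pi])$ with $P \wedge_{H\F_p} R$. Under this identification the augmentation to $H\F_p[\Pi]$ becomes the smash of augmentations $(P \to H\F_p) \wedge_{H\F_p} (R \to H\F_p[\Pi])$, while the map induced by $\epsilon$ becomes $\id_P \wedge_{H\F_p} \alpha_R$, with $\alpha_R \colon R \to H\F_p$ the total augmentation. I then record the pushout identity $P \simeq (P \wedge_{H\F_p} R) \wedge^L_R H\F_p$ coming from the cocartesian square with $R \to P \wedge_{H\F_p} R$ the canonical inclusion and $R \to H\F_p$ equal to $\alpha_R$. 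Plugging this into the formula above and using associativity of the derived smash product collapses the $P$-factor:
\[ H\F_p[\Pi] \wedge^L_{P \wedge_{H\F_p} R} P \simeq H\F_p[\Pi] \wedge^L_{P \wedge_{H\F_p} R} \bigl((P \wedge_{H\F_p} R) \wedge^L_R H\F_p\bigr) \simeq H\F_p[\Pi] \wedge^L_R H\F_p, \]
with the $R$-algebra structure on $H\F_p[\Pi]$ on the right given by the augmentation $R \to H\F_p[\Pi]$. Commutativity of the derived smash then rewrites this as $H\F_p \wedge^L_{\THH^{[n], H\F_p}(H\F_p[\Pi])} H\F_p[\Pi]$, which is the desired conclusion, and the $H\F_p$-algebra augmentations on both sides are easily matched by tracking the $S^n \to *$ maps through the identifications.

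I expect the main obstacle to be the compatibility of the splitting \eqref{eq:splitting} with the two structural maps I actually need, namely the augmentation of $\THH^{[n]}(H\F_p[\Pi])$ over $H\F_p[\Pi]$ (not just over $H\F_p$, as stated in \eqref{eq:splitting}) and the functorially induced map $\THH^{[n]}(\epsilon)$. This compatibility is what allows me to factor the derived tensor over $P \wedge_{H\F_p} R$ along the $R$-direction. It should follow from the naturality of the Hesselholt--Madsen splitting in the augmented commutative $H\F_p$-algebra $H\F_p[\Pi]$, by examining the construction of \eqref{eq:splitting} on the simplicial level, but it does require a short explicit verification.
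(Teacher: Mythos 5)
Your proposal is correct and follows essentially the same route as the paper's proof, which likewise combines Theorem \ref{thm:red-over} (yielding \eqref{eq:thhhh}) with the splitting \eqref{eq:splitting} and then collapses the $\THH^{[n]}(H\F_p)$-factor, there by splitting the two-sided bar construction $B(\THH^{[n]}(H\F_p), \THH^{[n]}(H\F_p[\Pi]), H\F_p[\Pi])$ into a smash of two bar constructions rather than by your base-change identity; the two manipulations are interchangeable. The compatibility of \eqref{eq:splitting} with the augmentation to $H\F_p[\Pi]$ and with the map induced by $H\F_p[\Pi] \ra H\F_p$, which you rightly flag as the point needing verification, is used implicitly in the paper's bar-construction splitting as well.
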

\begin{proof}
Theorem \ref{thm:red-over} applied to a model of the augmentation map
$H\F_p[\Pi] \ra H\F_p$ that is a cofibration yields that
\begin{equation} \label{eq:thhhh}
\THH^{[n], H\F_p[\Pi]}(H\F_p)  \simeq \THH^{[n]}(H\F_p)
\wedge^L_{\THH^{[n]}(H\F_p[\Pi])} H\F_p[\Pi].
\end{equation}
We use the two-sided bar construction as model for the above derived
smash product and use   
the splittings in \eqref{eq:splitting} and \eqref{eq:thhhh} to obtain 
\begin{align*}
& B(\THH^{[n]}(H\F_p), \THH^{[n]}(H\F_p[\Pi]), H\F_p[\Pi]) \\ 
\simeq & B(\THH^{[n]}(H\F_p), \THH^{[n]}(H\F_p) \wedge_{H\F_p}
\THH^{[n], H\F_p}(H\F_p[\Pi]), H\F_p[\Pi]) \\ 
\simeq & B(\THH^{[n]}(H\F_p), \THH^{[n]}(H\F_p), H\F_p) \wedge_{H\F_p}
B(H\F_p, \THH^{[n], H\F_p}(H\F_p[\Pi]), H\F_p[\Pi]) \\  
\simeq & H\F_p \wedge_{H\F_p} B(H\F_p, \THH^{[n], H\F_p}(H\F_p[\Pi]), H\F_p[\Pi]) 
\end{align*}
which is a model of $H\F_p \wedge^L_{\THH^{[n], H\F_p}(H\F_p[\Pi])} H\F_p[\Pi]$. 
\end{proof}
We apply the above result to special cases of pointed commutative 
monoids, where we can
identify the necessary ingredients for the above result.
\begin{prop}
\begin{enumerate}
\item[]
\item
Consider the  polynomial algebra $\F_p[x]$ over $\F_p$ (with $|x|= 0$,
augmented by sending $x\mapsto 0$), and let 
$B_1'(x) = \F_p[x]$ and $B_{n+1}'(x) =
\Tor_{*,*}^{B_{n}'(x)}(\F_p, \F_p)$ with total grading.
Then
$$ \THH^{[n], H\F_p[x]}_*(H\F_p) \cong B_{n+2}'(x).$$
 \item 
Let $m$ be a natural number such that $p$ divides $m$, and let
$B_{1}''(m) = \Lambda_{\F_p}(\varepsilon x) \otimes
\Gamma_{\F_p}(\varphi^0 x)$ with $|\varepsilon x| = 1$ and $|\varphi^0
x| = 2$ and  $B_{n+1}''(m)= \Tor^{B_n''(m)}(\F_p,  \F_p)$. Then
$$ \THH^{[n], H\F_p[x]/x^m}_*(H\F_p) \cong B_{n+1}''(m).$$
\item 
Let $G$ be a finitely generated abelian group, so,
$G = \Z^m \oplus \bigoplus_{i=1}^N \Z/q_i^{\ell_i}$ for some primes
$q_i$. Then $\THH^{[n], H\F_p[G]}_*(H\F_p)$ can be expressed in terms of
a tensor product of  factors that are isomorphic to $\THH^{[n],
  H\F_p[x]}_*(H\F_p)$ or $\THH^{[n], H\F_p[x]/(x^{p^\ell})}_*(H\F_p)$
for some $\ell$.
\end{enumerate}
\end{prop}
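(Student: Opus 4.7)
The plan is to apply Theorem~\ref{thm:Shuklamonoidalgs} in all three cases, reducing each claim to an algebraic $\Tor$ computation over $\HH^{\F_p,[n]}_*(\F_p[\Pi]) = \pi_*\THH^{[n],H\F_p}(H\F_p[\Pi])$, after computing the latter inductively via the suspension decomposition $S^n = D^n \cup_{S^{n-1}} D^n$.

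For part (a), I would prove by induction on $n$ that there is an isomorphism of graded augmented $\F_p[x]$-algebras $\HH^{\F_p,[n]}_*(\F_p[x]) \cong \F_p[x] \otimes B_{n+1}'(x)$. The base case $n=0$ is immediate from $\THH^{[0],H\F_p}(H\F_p[x]) = H\F_p[x] \wedge_{H\F_p} H\F_p[x]$. For the inductive step, Proposition~\ref{prop:2.7}(b) applied to $S^n = D^n \cup_{S^{n-1}} D^n$ (noting that $D^n$ is contractible) gives
\[\THH^{[n],H\F_p}(H\F_p[x]) \simeq H\F_p[x] \wedge^L_{\THH^{[n-1],H\F_p}(H\F_p[x])} H\F_p[x],\]
so passing to $\pi_*$ yields $\Tor^{\F_p[x]\otimes B_n'(x)}(\F_p[x],\F_p[x])$, which splits via Künneth into $\F_p[x] \otimes B_{n+1}'(x)$ (on the left factor $\F_p[x]$ acts on itself as the identity module, on the right factor $B_n'(x)$ acts through its augmentation to $\F_p$). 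Theorem~\ref{thm:Shuklamonoidalgs} then gives
\[\THH^{[n],H\F_p[x]}_*(H\F_p) \cong \Tor^{\F_p[x]\otimes B_{n+1}'(x)}(\F_p,\F_p[x]) \cong \F_p \otimes B_{n+2}'(x) = B_{n+2}'(x),\]
again by Künneth.

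For part (b), the approach is identical, with base case the classical computation $\HH^{\F_p}_*(\F_p[x]/x^m) \cong \F_p[x]/x^m \otimes B_1''(m)$, which holds precisely because $p\mid m$ makes the norm map in the standard periodic resolution of $\F_p[x]/x^m$ over $(\F_p[x]/x^m)^{\otimes 2}$ vanish in homology. Induction for $n\geq 1$, using $\Tor^{\F_p[x]/x^m}(\F_p[x]/x^m,\F_p[x]/x^m) \cong \F_p[x]/x^m$, yields $\HH^{\F_p,[n]}_*(\F_p[x]/x^m) \cong \F_p[x]/x^m \otimes B_n''(m)$, and Theorem~\ref{thm:Shuklamonoidalgs}, together with $\Tor^{\F_p[x]/x^m}(\F_p,\F_p[x]/x^m) \cong \F_p$, then produces $B_{n+1}''(m)$.

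For part (c), I would use that $\F_p[G_1\oplus G_2] \cong \F_p[G_1]\otimes_{\F_p}\F_p[G_2]$ and that the Loday construction is multiplicative under tensor products of commutative $\F_p$-algebras, so that $\HH^{\F_p,[n]}$ and the final $\Tor$ of Theorem~\ref{thm:Shuklamonoidalgs} both split as tensor products indexed by the summands of $G$. The summands $\F_p[\Z] = \F_p[t,t^{-1}]$ are handled by the same induction as in part~(a), which goes through verbatim for Laurent polynomials (the HKR and Koszul-type resolutions do not care about inverting $t$, and $t-1$ is still a regular element), producing copies of $\THH^{[n],H\F_p[x]}_*(H\F_p)$. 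The summands $\F_p[\Z/p^{\ell_i}]$ become $\F_p[x]/x^{p^{\ell_i}}$ under $x=t-1$ and fall into part~(b). For $q\neq p$, $\F_p[\Z/q^{\ell_i}]$ is étale over $\F_p$, so $\HH^{\F_p,[n]}_*(\F_p[\Z/q^{\ell_i}]) \cong \F_p[\Z/q^{\ell_i}]$ is concentrated in degree zero for every $n$, and the corresponding $\Tor$ factor collapses to $\F_p$, a trivial tensor factor that may be absorbed. The main obstacle throughout is justifying the collapse of the Künneth spectral sequences for the various derived smash products, so that the spectrum-level computations actually deliver the expected algebraic $\Tor$'s; in each case this follows from the small bigraded structure of the free resolutions built out of polynomial, truncated polynomial, exterior, and divided power algebras over $\F_p$, which forces the differentials to vanish for degree reasons. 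A secondary subtlety is keeping track of the two distinct module structures on $\F_p[x]$ (identity versus augmentation) that appear in the Künneth splittings.
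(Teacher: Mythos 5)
Your overall strategy coincides with the paper's: feed Theorem \ref{thm:Shuklamonoidalgs} with the known structure of $\pi_*\THH^{[n],H\F_p}(H\F_p[\Pi])\cong\HH^{[n],\F_p}_*(\F_p[\Pi])\cong\F_p[\Pi]\otimes B$ (with $B=B'_{n+1}(x)$, resp.\ $B''_n(m)$), and collapse the resulting $\Tor$; your bookkeeping of the two module structures on $\F_p[\Pi]$ and your treatment of (c) (splitting the group ring, \'etaleness for $q\neq p$ and for $\F_p[x]\to\F_p[x^{\pm1}]$, the substitution $x=y-1$) all match. The two real differences are these. First, you re-derive the input $\HH^{[n],\F_p}_*(\F_p[x])\cong\F_p[x]\otimes B'_{n+1}(x)$ and its truncated analogue by induction, whereas the paper simply cites \cite[Theorem 8.6]{BLPRZ} and \cite{BHLPRZ}; that is legitimate but means you inherit the burden of proving those bar spectral sequences collapse. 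Second, and more importantly, the paper does \emph{not} evaluate $H\F_p\wedge^L_{\THH^{[n],H\F_p}(H\F_p[\Pi])}H\F_p[\Pi]$ by a K\"unneth spectral sequence: it first rewrites it as $H\F_p\wedge_{H\F_p[\Pi]}\bigl(H\F_p[\Pi]\wedge^L_{\THH^{[n],H\F_p}(H\F_p[\Pi])}H\F_p[\Pi]\bigr)\simeq H\F_p\wedge_{H\F_p[\Pi]}\THH^{[n+1],H\F_p}(H\F_p[\Pi])$, whose homotopy is $\F_p\otimes_{\F_p[\Pi]}(\F_p[\Pi]\otimes B'_{n+2})=B'_{n+2}$ because $\HH^{[n+1],\F_p}_*(\F_p[\Pi])$ is free over $\F_p[\Pi]$ via the (identified) augmentation. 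The $\Tor$ is then concentrated in homological degree zero and no spectral-sequence argument is needed at all.

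The one genuine gap in your version is the blanket claim that the K\"unneth spectral sequences collapse ``for degree reasons.'' For the final step in (a), the $E^2$-term is $B'_{n+2}(x)$, and already for $n\geq 2$ this contains divided-power factors with algebra generators in arbitrarily high homological filtration and in \emph{both} parities of total degree (e.g.\ $B'_4(x)=\bigotimes_k\Lambda(\sigma\gamma_{p^k}y)\otimes\Gamma(\varphi\gamma_{p^k}y)$ has generators of odd total degree $2p^k+1$ and even total degree $2p^{k+1}+2$), so a parity or low-filtration argument does not rule out differentials; the same issue afflicts your inductive computation of $\HH^{[n],\F_p}_*(\F_p[x])$. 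To close this you would need either the paper's rewriting trick above (which eliminates the problem at the last step) together with the cited collapse results of \cite{BLPRZ}, or a genuine structural argument such as the Hopf-algebra one used elsewhere in the paper (the first nonzero differential must map an indecomposable to a primitive, and one checks bidegrees). As written, ``the small bigraded structure of the free resolutions forces the differentials to vanish'' is an assertion, not a proof, and it is exactly the point where the paper's proof is doing something you are not.
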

\begin{proof}
We can rewrite 
$H\F_p \wedge^L_{\THH^{[n], H\F_p}(H\F_p[\Pi])} H\F_p[\Pi]$ as
$$ H\F_p \wedge_{H\F_p[\Pi]} H\F_p[\Pi]\wedge^L_{\THH^{[n],
    H\F_p}(H\F_p[\Pi])} H\F_p[\Pi]$$ 
which is equivalent to 
$$ H\F_p \wedge_{H\F_p[\Pi]} \THH^{[n+1], H\F_p}(H\F_p[\Pi]).$$
In \cite{BLPRZ} $\pi_*(\THH^{[n+1], H\F_p}(H\F_p[\Pi])) \cong
\HH_*^{[n+1], \F_p}(\F_p[\Pi])$ is  
calculated in the cases of the Proposition: 

For (a) we consider the pointed monoid $\Pi=\{0,1,x, x^2, \ldots\}$
whose associated 
pointed monoid ring $\F_p[\Pi]$ is the ring of polynomials over
$\F_p$. In  \cite[Theorem 8.6]{BLPRZ}, we show inductively that
$$ \HH^{[n], \F_p}_*(\F_p[x]) \cong \F_p[x] \otimes B_{n+1}'(x).$$ 
We also get inductively that the augmentation on $
\HH^{[n]}_*(\F_p[x]) $ is the identity on the $ \F_p[x]$ factor and
for degree reasons the obvious augmentation on $
B_{n+1}'(x)$. Therefore the claim follows.  

Higher Hochschild homology of truncated polynomial algebras of the form
$\F_p[x]/x^m$ for $m$ divisible by $p$ was calculated in \cite{BLPRZ}
(the case $m=p^\ell$) and \cite{BHLPRZ} (the general case).  The
result in those cases is 
$$ \HH^{[n]}_*(H\F_p[x]/x^m) \cong \F_p[x]/x^m \otimes B_n''(m)$$
where $B_1''(m) = \Lambda_{\F_p}(\varepsilon x) \otimes
\Gamma_{\F_p}(\varphi^0 x)$ with $|\varepsilon x| = 1$ and $|\varphi^0
x| = 2$ and where $B_{n+1}''(m)= \Tor^{B_n''(m)}(\F_p,  \F_p)$. This
implies (b). 

For a finitely generated abelian group as in (c) the group ring splits as
$$ \F_p[\Z]^{\otimes m} \otimes \bigotimes_{i=1}^N
\F_p[\Z/q_i^{\ell_i}].$$
The torsion groups with torsion prime to $p$ do not contribute to
higher (topological) Hochschild homology
because they are \'etale over
$\F_p$ (see \cite[Theorem 9.1]{BLPRZ}  and  \cite[Theorem
7.9]{Horel}). For the free factors we
use the fact that $\F_p[\Z]= \F_p[x^{\pm 1}]$ is \'etale over
$\F_p[x]$; for the factors with torsion that is a power of $p$, we use
the fact that $\F_p[\Z/p^\ell] \cong \F_p[y]/(y^{p^\ell} -1) \cong
\F_p[x]/(x^{p^\ell})$ by taking $x=y-1$. 
\end{proof}

\begin{rem}
Let $k$ be a commutative ring and let $A$ be a commutative
$k$-algebra.
In \cite{BHLPRZ} we define higher order Shukla homology of $A$ over
$k$ as
$$ \Shukla^{[n], k}_*(A) := \THH_*^{[n], Hk}(HA).$$
Thus the calculations above determine higher order Shukla homology for
commutative pointed monoid algebras over $\F_p$,  $\Shukla^{[n],
  \F_p[\Pi]}_*(\F_p)$. 
\end{rem}
\subsection{The examples $ko$, $ku$, $\ell$ and $\mathit{tmf}$} 

Angeltveit and Rognes calculate in \cite[5.13, 6.2]{AR} $H_*(\THH(ko); \F_2)$,
$H_*(\THH(\mathit{tmf}); \F_2)$, $H_*(\THH(ku); \F_2)$ and for any odd
prime $p$ they determine $H_*(\THH(\ell); \F_p)$ where $\ell \ra
ku_{(p)}$ is the Adams summand of $p$-local connective topological
complex K-theory.

The following lemma collects some immediate consequences of their
work, which were 
already noticed in \cite{G}.  These will be the basis of the
calculations in the results that follow the lemma. The index of a 
generator denotes its degree.
\begin{lem} \label{lem:thhwithcoeffs}
\begin{enumerate}
\item[]
\item
$$\THH_*(ko; \F_2) \cong \Lambda(x_5, x_7) \otimes \F_2[\mu_8].$$
\item
$$\THH_*(\mathit{tmf}; \F_2) \cong \Lambda(x_9, x_{13}, x_{15})
\otimes \F_2[\mu_{16}].  $$
\item
$$\THH_*(ku; \F_2) \cong \Lambda(x_3, x_7) \otimes \F_2[\mu_8].$$
\item
At any odd prime:
$$\THH_*(\ell; \F_p)  \cong \Lambda(x_{2p-1}, x_{2p^2-1}) \otimes \F_p[y_{2p^2}].$$
\end{enumerate}
\end{lem}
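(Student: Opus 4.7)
My plan is to deduce each formula directly from the mod-$p$ homology computations of Angeltveit--Rognes in \cite[Propositions 5.13, 6.2]{AR}. The first step is to unpack the notation: by Definition \ref{def:loday}, $\THH_*(E; \F_p)$ is the homotopy of $\L_{S^1}(E; H\F_p)$, and the standard identification of the cyclic bar model gives
\[\L_{S^1}(E; H\F_p) \simeq \THH(E) \wedge_E H\F_p,\]
where $H\F_p$ is regarded as a commutative $E$-algebra via the composite $E \to H\Z \to H\F_p$. So it suffices to compute $\pi_*(\THH(E) \wedge_E H\F_p)$ for each of the four $E$.

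For each $E \in \{ko, \mathit{tmf}, ku, \ell\}$, the Bökstedt spectral sequence computations in \cite{AR} present
\[H_*(\THH(E); \F_p) \cong H_*(E; \F_p) \otimes_{\F_p} R_E\]
as an isomorphism of $H_*(E; \F_p)$-algebras, where $R_E$ is precisely the exterior-tensor-polynomial algebra stated in the lemma (with generators in degrees $5, 7, 8$ for $ko$; $9, 13, 15, 16$ for $\mathit{tmf}$; $3, 7, 8$ for $ku$; and $2p-1$, $2p^2-1$, $2p^2$ for $\ell$). The $H_*(E; \F_p)$-module structure comes from the unit $E \to \THH(E)$, and the crucial point I would extract from the Bökstedt $E^\infty$-page is that $H_*(\THH(E); \F_p)$ is \emph{free} as an $H_*(E; \F_p)$-module on generators spanning $R_E$.

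To pass from $H_*(\THH(E); \F_p)$ to $\pi_*(\THH(E) \wedge_E H\F_p)$, I would use the Künneth (base-change) spectral sequence
\[\Tor^{H_*(E;\F_p)}_{*,*}(H_*(\THH(E);\F_p), \F_p) \Rightarrow H_*(\THH(E) \wedge_E H\F_p;\F_p),\]
which collapses at $E^2$ by the freeness above, yielding $H_*(\THH(E) \wedge_E H\F_p; \F_p) \cong R_E \otimes_{\F_p} \mathcal{A}_*$, where $\mathcal{A}_*$ is the dual Steenrod algebra. Since $\THH(E) \wedge_E H\F_p$ is an $H\F_p$-module spectrum, its mod-$p$ homology equals its homotopy tensored with $\mathcal{A}_*$, so stripping off that factor gives $\THH_*(E; \F_p) = \pi_*(\THH(E) \wedge_E H\F_p) \cong R_E$, as claimed. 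The main obstacle is verifying the freeness of $H_*(\THH(E); \F_p)$ over $H_*(E; \F_p)$ in each example, which reduces to reading off the multiplicative generators exhibited in \cite{AR}; this is where the remark that the lemma is an \emph{immediate} consequence of their work is doing its work.
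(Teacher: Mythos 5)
Your proposal is essentially the paper's proof: both rest on the Angeltveit--Rognes computation that $H_*(\THH(E);\F_p)$ is free over $H_*(E;\F_p)$ on the stated exterior-tensor-polynomial algebra, fed into a K\"unneth spectral sequence over $H_*(E;\F_p)$ that collapses onto the $s=0$ column, followed by a degree count. The one slip is in identifying the abutment: the spectral sequence with $E^2_{s,t}=\Tor^{H_*(E;\F_p)}_{s,t}(H_*(\THH(E);\F_p),\F_p)$ (note the $\F_p$, not $\mathcal{A}_*$, in the second variable) arises from the rewriting $\THH(E)\wedge^L_E H\F_p \simeq (\THH(E)\wedge H\F_p)\wedge^L_{E\wedge H\F_p}H\F_p$ and converges directly to $\pi_*(\THH(E;H\F_p))$ rather than to its mod-$p$ homology, so your final step of stripping off a dual Steenrod algebra factor is not needed -- the collapse already lands you on $A_E$.
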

\begin{proof}
In all four cases Angeltveit and Rognes show that
$H_*(\THH(E); \F_p)$ is of the form $H_*(E; \F_p) \otimes A_E$ for
$A_E$ as described in  \eqref{AE} below, where 
$p=2$ for $E=ko, \mathit{tmf}, ku$ and $p$ is odd for $E = \ell$. We
rewrite  $\pi_*(\THH(E; H\F_p))$ as
\begin{align*}
\pi_*(\THH(E; H\F_p)) & \cong \pi_*(\THH(E) \wedge^L_E H\F_p) \\
                   & \cong \pi_*((\THH(E) \wedge H\F_p) \wedge^L_{E
                     \wedge H\F_p} H\F_p)
\end{align*}
and thus we get a spectral sequence
$$ E^2_{s,t} = \Tor_{s,t}^{H_*(E; \F_p)}(H_*(\THH(E); \F_p), \F_p)$$
converging to the homotopy groups of $\THH(E; H\F_p)$. As
$H_*(\THH(E); \F_p) \cong H_*(E; \F_p) \otimes A_E$ in all four cases,
the $E^2$-term above is concentrated in the $s=0$ column with
$$ E^2_{0,*} \cong A_E,$$
where
\begin{equation}\label{AE}
 A_E = \begin{cases}
\Lambda_{\F_2}(\sigma\bar{\xi}_1^4, \sigma\bar{\xi}_2^2) \otimes
\F_2[\sigma\bar{\xi}_3], & E = ko, \\[1ex]
\Lambda_{\F_2}(\sigma\bar{\xi}_1^8, \sigma\bar{\xi}_2^4,
\sigma\bar{\xi}_3^2) \otimes \F_2[\sigma\bar{\xi}_4], &
E=\mathit{tmf}, \\[1ex]
\Lambda_{\F_2}(\sigma\bar{\xi}_1^2, \sigma\bar{\xi}_2^2) \otimes
\F_2[\sigma\bar{\xi}_3], & E = ku, \text{  and }\\[1ex]
\Lambda_{\F_p}(\sigma\bar{\xi}_1, \sigma\bar{\xi}_2) \otimes
\F_p[\sigma \bar{\tau}_2], & E = \ell.
  \end{cases}
  \end{equation}
The degrees are the usual degrees in the dual of the Steenrod algebra,
hence at $2$ we have $|\xi_i|=2^i-1$ and at odd primes
$|\xi_i|=2p^i-2$ and  $|\tau_i|=2p^i-1$. We also have $|\sigma y| = |y|+1$. The
$\bar{(.)}$ denotes conjugation in the dual of the Steenrod
algebra. Counting degrees gives the claim.
\end{proof}
We can use the equivalence $\THH^A(B) \simeq B \wedge^L_{\THH(A; B)}
\THH(B)$ from Theorem \ref{thm:red-over} to deduce the following result.
\begin{thm} \label{thm:thh^E(hfp)}
There are additive isomorphisms
\begin{enumerate}
\item[]
\item
$\THH^{ko}(H\F_2) \cong \Gamma_{\F_2}(\rho^0x_5) \otimes
\Gamma_{\F_2}(\rho^0x_7) \otimes \F_2[\mu_2]/\mu_2^4$,
\item
$\THH^{\mathit{tmf}}(H\F_2) \cong \Gamma_{\F_2}(\rho^0x_9) \otimes
\Gamma_{\F_2}(\rho^0x_{13}) \otimes \Gamma_{\F_2}(\rho^0x_{15})
\otimes \F_2[\mu_2]/\mu_2^8$,
\item
$\THH^{ku}(H\F_2) \cong \Gamma_{\F_2}(\rho^0x_3) \otimes
\Gamma_{\F_2}(\rho^0x_7) \otimes \F_2[\mu_2]/\mu_2^4$,
\item
and for odd primes $p$ we get an additive isomorphism
$$\THH^{\ell}(H\F_p) \cong \Gamma_{\F_p}(\rho^0x_{2p-1}) \otimes
\Gamma_{\F_p}(\rho^0x_{2p^2-1}) \otimes \F_p[\mu_2]/\mu_2^{p^2}.$$
\end{enumerate}
Here $\rho^0$ raises degree by one.
\end{thm}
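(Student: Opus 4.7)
The plan is to handle all four parts uniformly by a single K\"unneth spectral sequence argument built on the juggling formula. First, I apply Theorem~\ref{thm:red-over}(a) to the sequence $S \to E \to H\F_p$ (after replacing $H\F_p$ by a cofibrant commutative $E$-algebra), producing
\[
\THH^E(H\F_p) \simeq H\F_p \wedge^L_{\THH(E;\,H\F_p)} \THH(H\F_p)
\]
in each of the four cases. Lemma~\ref{lem:thhwithcoeffs} records $\pi_*\THH(E;H\F_p)$, while B\"okstedt's theorem gives $\pi_*\THH(H\F_p) = \F_p[\mu_2]$ with $|\mu_2|=2$.

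Next, I run the EKMM K\"unneth spectral sequence
\[
E^2_{s,t} = \Tor^{\pi_*\THH(E;\,H\F_p)}_{s,t}\bigl(\F_p,\,\F_p[\mu_2]\bigr) \;\Longrightarrow\; \pi_{s+t}\THH^E(H\F_p),
\]
where the two modules are obtained from the augmentation and from the natural map induced by the unit $E \to H\F_p$. The exterior generators $x_i$ lie in odd degrees and so act as zero on $\F_p[\mu_2]$. The single polynomial generator $g$ has even degree $2n$, with $n=4,8,4,p^2$ for $E=ko,\mathit{tmf},ku,\ell$ respectively, and since $\pi_*\THH(H\F_p)$ is one-dimensional in every even degree, $g$ must map to a unit multiple of $\mu_2^n$. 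Consequently $\F_p[\mu_2]$ is free of rank $n$ over $\F_p[g]$ on the basis $1,\mu_2,\dots,\mu_2^{n-1}$.

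Because $\pi_*\THH(E;H\F_p)$ is a tensor product of exterior algebras on the $x_i$ with the polynomial algebra $\F_p[g]$, the Tor splits as a tensor product whose exterior factors $\Tor^{\Lambda_{\F_p}(x_i)}(\F_p,\F_p)$ are the divided power algebras $\Gamma_{\F_p}(\rho^0 x_i)$ on classes of total degree $|x_i|+1$, and whose remaining factor $\Tor^{\F_p[g]}(\F_p,\F_p[\mu_2])$ collapses to $\F_p[\mu_2]/\mu_2^n$ in homological degree zero by freeness. Multiplying these factors together case by case reproduces exactly the vector spaces on the right-hand sides of (a)--(d).

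The main obstacle will be to see that this spectral sequence collapses at $E^2$, so that $E^2 \cong E^\infty$ additively. A parity argument does the job: each $\rho^0 x_i$ has even total degree (since $|x_i|$ is odd), each $\mu_2^k$ has even total degree, and tensor products preserve evenness, so every class in $E^2$ has even total degree; since $d_r$ shifts total degree by $-1$, every differential vanishes. As only an additive statement is claimed, no extension problems have to be resolved.
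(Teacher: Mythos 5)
Your overall strategy---the juggling formula from Theorem~\ref{thm:red-over} with $B=H\F_p$, the K\"unneth spectral sequence over $\pi_*\THH(E;H\F_p)$, the splitting of the $\Tor$ term using that the odd-degree exterior generators act trivially on $\F_p[\mu_2]$, and the collapse by parity of total degrees---is exactly the paper's argument. But there is one genuine gap, and it sits at the only point where input beyond bookkeeping is required: you assert that the polynomial generator $g$ of degree $2n$ ``must map to a unit multiple of $\mu_2^n$'' because $\pi_{2n}\THH(H\F_p)$ is one-dimensional. One-dimensionality only forces the image of $g$ to be $\lambda\mu_2^n$ for some scalar $\lambda$, and $\lambda=0$ is not excluded by any degree count. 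The distinction is not cosmetic: if $g$ acted trivially on $\F_p[\mu_2]$, then $\Tor^{\F_p[g]}_{*,*}(\F_p,\F_p[\mu_2])$ would be $\Lambda_{\F_p}(\sigma g)\otimes\F_p[\mu_2]$ rather than $\F_p[\mu_2]/\mu_2^{n}$, and the resulting $E^2$-term (and answer) would be entirely different, in particular infinite-dimensional in each even degree.

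The paper closes this gap with the explicit Angeltveit--Rognes identification of the generators: $\mu_8\in\THH_*(ko;H\F_2)$ corresponds to $\sigma\bar{\xi}_3$, the relation $(\sigma\bar{\xi}_k)^2=\sigma\bar{\xi}_{k+1}$ from the proof of \cite[5.12]{AR} gives $\sigma\bar{\xi}_3=(\sigma\bar{\xi}_1)^4$, and $\sigma\bar{\xi}_1$ is B\"okstedt's $\mu_2$; hence $\mu_8\mapsto\mu_2^4\neq 0$, with the analogous relation $(\sigma\bar{\tau}_k)^p=\sigma\bar{\tau}_{k+1}$ handling $\ell$ at odd primes (and similarly for $ku$ and $\mathit{tmf}$). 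Some such argument---naturality of the suspension together with these multiplicative relations---is needed to establish the nonvanishing; as written, your computation of the polynomial $\Tor$ factor is unjustified. The rest of your proposal (the splitting of $\Tor$, the identification of the exterior factors with divided power algebras, the evenness argument for collapse, and the observation that no extension problems arise for a purely additive statement) matches the paper.
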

\begin{proof}
We use Theorem \ref{thm:red-over} in the case where $B=H\F_p$.
In \cite{B}, B\"okstedt shows that $\THH_*(H\F_p) \cong
\F_p[\mu_2]$ for all primes $p$. We give the details for the case
$ko$; the arguments for the other examples are completely
analogous.

The $E^2$-term of the spectral sequence is
\[\Tor_*^{\Lambda(x_5,x_7)\otimes \F_2[\mu_8]}(\F_2,\F_2[\mu_2]) \Longrightarrow
\THH_*^{ko}(H\F_2).\]
Since both $x_5$ and $x_7$ have odd degrees, they cannot act on $\mu_2$ other
than trivially.  Thus we can rewrite the left-hand side as
\[\Tor_*^{\Lambda(x_5,x_7)}(\F_2,\F_2) \otimes
\Tor_*^{\F_2[\mu_8]}(\F_2,\F_2[\mu_2]).\]

The explicit description of the generators in \cite[Theorem 6.2]{AR}
implies that the map $\F_2[\mu_8] \to \F_2[\mu_2]$ takes
$\mu_8$ to $\mu_2^4$, because $\mu_8$ corresponds to
$\sigma\bar{\xi}_3$ and Angeltveit and Rognes show \cite[proof of
5.12]{AR} that the
$\sigma\bar{\xi}_k$ satisfy
$$ (\sigma\bar{\xi}_k)^2 = \sigma\bar{\xi}_{k+1}$$
for $p=2$ and $\mu_2$ in B\"okstedt's calculation corresponds to
$\sigma\bar{\xi}_1$.

Therefore the right-hand $\Tor$ is isomorphic to
$\F_2[\mu_2]/\mu_2^4$.  Hence the $E^2$-term is isomorphic to
\[\Gamma_{\F_2}(\rho^0x_5)\otimes \Gamma_{\F_2}(\rho^0x_7)
\otimes \F_2[\mu_2]/\mu_2^4.\]

Since all the nonzero classes in this $E^2$-term have even total degree,
the spectral sequence must collapse at $E^2$.

In the case of the Adams summand, $\ell$, we work at odd primes and
here in \cite[proof of 5.12]{AR} the relation
$$ (\sigma\bar{\tau}_k)^p = \sigma\bar{\tau}_{k+1}$$
is shown. Hence $\sigma\bar{\tau}_2$ in $\THH_*(\ell; H\F_p)$
corresponds to $(\sigma\bar{\tau}_0)^{p^2}$ and $\sigma\bar{\tau}_0$ is
the element that represents $\mu_2$ at odd primes.
\end{proof}

\begin{rem}
In order to determine for instance $\THH^{ko}(H\F_2)$ multiplicatively
we would also need to control possible multiplicative extensions.  We
can show by degree considerations that
 $(\rho^kx_7)^2=0$, but not whether $(\rho^kx_5)^2$ must vanish.

As a general warning we discuss the case of the bar spectral sequence
in the case $\THH_*(ku; H\F_2)$. Here, we know the answer from Lemma
\ref{lem:thhwithcoeffs}:
$$ \THH_*(ku; H\F_2) \cong \Lambda(x_3, x_7) \otimes \F_2[\mu_8].$$
However, if we use the bar spectral sequence we get as the $E^2$-term
$$ \Tor^{\THH^{[0]}_*(ku; H\F_2)}_{*,*}(\F_2, \F_2) \cong
\Tor^{\F_2[\bar{\xi}_1^2,
\bar{\xi}_2^2, \bar{\xi}_k, k \geq 3]}_{*,*}(\F_2, \F_2)$$
because
$$ \THH^{[0]}_*(ku; H\F_2) \cong H_*(ku; \F_2) \cong \F_2[\bar{\xi}_1^2,
\bar{\xi}_2^2, \bar{\xi}_k, k \geq 3]$$
(see for instance \cite[6.1]{AR}). Hence the spectral sequence
collapses because
$$ E^2_{*,*} \cong \Lambda_{\F_2}(\sigma\bar{\xi}_1^2,
\sigma\bar{\xi}_2^2, \sigma\bar{\xi}_k, k \geq 3)$$
and all generators are concentrated on the $1$-line. But we know
that the exterior generators in $\Lambda_{\F_2}(\sigma\bar{\xi}_k, k
\geq 3)$ extend to form $\F_2[\mu_8]$, so there are highly non-trivial
multiplicative extensions in this spectral sequence.
\end{rem}

\begin{rem}
Veen established a Hopf-structure on the bar spectral sequence 
\cite[\S 7]{V} for higher order $\THH$ of $H\F_p$. His argument generalizes: 
The pinch maps $\mathbb{S}^n \ra  \mathbb{S}^n \vee \mathbb{S}^n$ give rise to 
a comultiplication 
$$ \THH^{[n]}(A; C) \ra \THH^{[n]}(A; C) \wedge_C \THH^{[n]}(A; C)$$
and as the multiplication on $\THH^{[n]}(A; C)$ is induced by the fold
map, both structures  
are compatible. For $\THH(A)$ this structure is heavily used in
\cite{AR}. 

If $A$ is connective, then we can consider $A \ra H(\pi_0A)$. For $C=
H\F_p$ this multiplication and 
comultiplication turns $\THH_*^{[n]}(A; H\F_p)$ into an $\F_p$-Hopf
algebra. Veen's arguments also transfer to yield that the bar spectral sequence 
$$ \Tor_{*,*}^{\THH_*^{[n]}(A; H\F_p)}(\F_p, \F_p) \Rightarrow
\THH_*^{[n+1]}(A; H\F_p)$$ 
is a spectral sequence of Hopf-algebras; in particular, the
differentials satisfy a Leibniz and a  
co-Leibniz rule and these facts let us determine the differentials in
certain cases.  
\end{rem}

\begin{thm} 
Additively,
\[\THH^{[2]}(ko; H\F_2) \cong \Gamma_{\F_2}(\rho^0x_5)\otimes
\Gamma_{\F_2}(\rho^0x_7)
\otimes \Lambda_{\F_2}(\varepsilon\mu_8).\]
Here, the degrees are $|\rho^0x_5|=6$, $|\rho^0x_7|=8$ and
$|\varepsilon\mu_8|=9$.
\end{thm}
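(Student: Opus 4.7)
The plan is to run a bar spectral sequence coming from the decomposition $S^{2}=D^{2}\cup_{S^{1}}D^{2}$. Combining Proposition~\ref{prop:2.7}(b) with the contractibility of $D^{2}$ (in the spirit of the proof of Theorem~\ref{thm:eva2nd}) yields
\[\THH^{[2]}(ko;H\F_2)\simeq H\F_2\wedge^L_{\THH(ko;H\F_2)}H\F_2,\]
and hence a K\"unneth spectral sequence
\[E^2_{*,*}=\Tor^{\THH_*(ko;H\F_2)}_{*,*}(\F_2,\F_2)\Longrightarrow \THH^{[2]}_{*}(ko;H\F_2).\]
By Lemma~\ref{lem:thhwithcoeffs}(a) the input algebra is $\Lambda_{\F_2}(x_5,x_7)\otimes\F_2[\mu_8]$. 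Since $\Tor$ over a tensor product of $\F_2$-algebras splits,
\[E^2_{*,*}\cong \Tor^{\Lambda(x_5)}(\F_2,\F_2)\otimes \Tor^{\Lambda(x_7)}(\F_2,\F_2)\otimes \Tor^{\F_2[\mu_8]}(\F_2,\F_2)\cong \Gamma_{\F_2}(\rho^0x_5)\otimes\Gamma_{\F_2}(\rho^0x_7)\otimes\Lambda_{\F_2}(\varepsilon\mu_8)\]
with the stated degrees; this already matches the target additively.

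It remains to show that the spectral sequence collapses at $E^2$. Here the total-degree parity argument used in Theorem~\ref{thm:thh^E(hfp)} is unavailable, because $\varepsilon\mu_8$ sits in odd total degree $9$. Instead I would exploit the Hopf-algebra structure on the bar spectral sequence (recalled in the remark preceding the present theorem, due to Veen). The generators $\rho^0x_5$, $\rho^0x_7$, $\varepsilon\mu_8$ are primitive and all lie in bar filtration $1$, so any $d_r$ with $r\ge 2$ pushes them into bar filtration $\le -1$ and is forced to vanish. The co-Leibniz identity $\Delta\circ d_r=(d_r\otimes 1+1\otimes d_r)\circ\Delta$ combined with induction on the divided-power index $k$ then forces $d_r\gamma_k(\rho^0x_i)$ to be primitive in its target bidegree. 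The primitives of $E^2$ are the $\F_2$-span of $\rho^0x_5,\rho^0x_7,\varepsilon\mu_8$, which sit at bidegrees $(1,5),(1,7),(1,8)$; a brief arithmetic check shows that the target bidegrees $(k-r,\,5k+r-1)$ and $(k-r,\,7k+r-1)$ never meet any of these, so each $d_r\gamma_k$ vanishes. The multiplicative Leibniz rule then propagates vanishing to all products, giving $E^\infty=E^2$. Since everything in sight is an $\F_2$-vector space, there is no additive extension problem and the stated additive isomorphism follows.

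The main obstacle is the collapse argument, in particular handling the non-primitive divided powers $\gamma_k$ uniformly by coordinating induction on the spectral sequence page $r$ and on $k$ with the Hopf-algebra Leibniz and co-Leibniz constraints, and verifying the bidegree impossibilities that prevent any differential on a divided power from hitting one of the three primitives in $E^2$.
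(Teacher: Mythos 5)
Your proposal is correct and follows essentially the same route as the paper: the same bar/K\"unneth spectral sequence from $S^2=D^2\cup_{S^1}D^2$ with $E^2\cong\Gamma_{\F_2}(\rho^0x_5)\otimes\Gamma_{\F_2}(\rho^0x_7)\otimes\Lambda_{\F_2}(\varepsilon\mu_8)$, and the same collapse argument via the Hopf-algebra structure forcing any first nonzero differential to land on a primitive in filtration $1$, which the bidegree arithmetic rules out. The only cosmetic difference is that you run the co-Leibniz induction over all divided powers $\gamma_k$, whereas the paper checks only the indecomposables $\rho^k x_i=\gamma_{2^k}(\rho^0x_i)$ in bidegree $(2^k,2^ki)$; both give the same estimate and the same conclusion.
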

\begin{proof}
Using the $\Tor$ spectral sequence we get
\[\Tor_{*,*}^{\Lambda(x_5)\otimes \Lambda(x_7) \otimes \F_2[\mu_8]}(\F_2,\F_2)
\Longrightarrow \THH_*^{[2]}(ko, H\F_2).\]
The $E^2$ page of the spectral sequence is of the form $\Gamma(\rho^0x_5)
\otimes \Gamma(\rho^0 x_7) \otimes \Lambda( \epsilon\mu_8)$.  This, in turn, is
isomorphic to
\[\bigotimes_{k=0}^{\infty}
\underbrace{\F_2(\rho^kx_5)/(\rho^kx_5)^2}_{\Lambda(\rho^kx_5)}
\otimes
\bigotimes_{\ell=0}^\infty
\underbrace{\F_2(\rho^kx_7)/(\rho^kx_7)^2}_{_{\Lambda(\rho^kx_7)}} \otimes
\Lambda (\epsilon \mu_8),\] with bidegrees $||\epsilon\mu_8|| =
(1,8)$, and $||\rho^kx_i|| =
(2^k,2^ki)$.  The claim is that the spectral sequence collapses
at $E^2$.

As the spectral sequence above is a spectral sequence of Hopf
algebras, the smallest nonzero differential must go from an
indecomposable element to a primitive
element.  As the only primitive element in $\Gamma(\rho^0x_i)$ is $\rho^0x_i$ we just
need to check that no differentials hit $\rho^0x_i$, $i=5, 7$, or
$\epsilon \mu_8$.  These
have bidegrees $(1,i)$ and $(1,8)$, respectively, and thus if they are hit by
$d^r$ of an indecomposable $d^r(\rho^k x_j)$, which would have
bidegree $(2^k-r, 2^k j +r-1)$,
we must have $2^k-r=1$ and $2^kj+r-1=5$, $7$, or $8$.  Substituting
$r=2^k -1$ into the
second expression, it becomes $2^k(j+1)-2$, where clearly $k\geq 1$
and $j=5$ or $7$, making $2^k(j+1)-2 \geq10$.  So the spectral
sequence collapses at
$E^2$.

\end{proof}

So far, we cannot rule out non-trivial multiplicative extensions.
These could be
nontrivial only if there is
a multiplicative generator  $\rho^kx_i$ or $\epsilon\mu_8$ with
whose square is not zero in $\THH^{[2]}_*(ko, H\F_2)$ (although it is
zero in the $E^\infty$ term).  This is possible only
if $(\rho^kx_i)^2$ has filtration degree less than $2^{k+1}$, or
$(\epsilon\mu_8)^2$ has filtration degree less
than $2$.  The latter is clearly impossible, since there is nothing in
total degree $18$ in filtration degrees
$0$ or $1$.  If $i=7$, by similar arguments we cannot have anything in
total degree
$2^{k+1}\cdot 8$ in filtration degree less than $2^{k+1}$: it would
have to be constructed out of elements
in bidegrees $(a, 5a)$, $(b, 7b)$ and possibly also one occurrence of
the element of bidegree $(1, 8)$, but then the first coordinates would
have to add up to at least $2^{k+1}$ to have the total degree equal to
$2^{k+1}\cdot 8$.
However, for the cases $(\rho^kx_5)^2$ one cannot rule out extensions
just for degree reasons.

\bigskip
\noindent
In a similar manner as above, we can exclude non-trivial differentials in the
other three cases:
\begin{prop} 
There are additive isomorphisms
\begin{enumerate}
\item[]
\item
$\THH_*^{[2]}(ku; H\F_2) \cong \Gamma_{\F_2}(\rho^0x_3) \otimes
\Gamma_{\F_2}(\rho^0x_7) \otimes \Lambda_{\F_2}(\varepsilon \mu_8)$,
\item
$\THH_*^{[2]}(\mathit{tmf}; H\F_2) \cong \Gamma_{\F_2}(\rho^0x_9) \otimes
\Gamma_{\F_2}(\rho^0x_{13}) \otimes
\Gamma_{\F_2}(\rho^0x_{15})\otimes \Lambda_{\F_2}(\varepsilon \mu_{16})$,
\item
and for any odd prime $p$ we get an additive isomorphism
$$\THH_*^{[2]}(\ell; H\F_p) \cong \Gamma_{\F_p}(\rho^0x_{2p-1}) \otimes
\Gamma_{\F_p}(\rho^0x_{2p^2-1}) \otimes \Lambda_{\F_p}(\varepsilon
\mu_{2p^2}).$$
\end{enumerate}
\end{prop}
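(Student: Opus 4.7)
The plan is to imitate the $ko$ proof in each of the three cases. For each $E \in \{ku, \mathit{tmf}, \ell\}$ the bar/Tor spectral sequence
\begin{equation*}
E^2_{*,*} = \Tor^{\THH_*(E; H\F_p)}_{*,*}(\F_p, \F_p) \Longrightarrow \THH^{[2]}_*(E; H\F_p)
\end{equation*}
has input given by Lemma~\ref{lem:thhwithcoeffs} (with $p = 2$ for $ku$, $\mathit{tmf}$ and $p$ odd for $\ell$). Since the generators $x_i$ sit in odd degrees, they cannot act nontrivially on the even-degree polynomial class, so $\THH_*(E; H\F_p)$ splits as a tensor product, and the $E^2$-page splits accordingly as a tensor product of divided power algebras on the suspended $\rho^0 x_i$'s with an exterior algebra on the suspended polynomial generator. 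This produces exactly the claimed additive form.

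The bulk of the work is to rule out all differentials. Using the Hopf algebra structure on the bar spectral sequence coming from the pinch map $S^n \to S^n \vee S^n$, as discussed in the remark preceding the $ko$ computation, the first nonzero differential must send an indecomposable to a primitive. The indecomposables on $E^2$ are the $\rho^k x_i$ for $k \geq 0$ together with the $\varepsilon\mu_j$, while the primitives all sit in bar filtration~$1$, namely the $\rho^0 x_i$ and $\varepsilon\mu_j$ themselves. A differential $d^r$ on $\rho^k x_i$, of bidegree $(p^k, p^k i)$, has image of bidegree $(p^k - r,\, p^k i + r - 1)$; for this to land in filtration~$1$ one needs $r = p^k - 1$, forcing target internal degree $p^k(i + 1) - 2$. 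Any differential from $\varepsilon\mu_j$ would land in non-positive filtration and hence vanishes automatically.

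What remains is a short numerical check in each case: verify that $2^k(i+1) - 2 \notin \{3, 7, 8\}$ for $i \in \{3, 7\}$ and $k \geq 1$ in the $ku$ case, that $2^k(i+1) - 2 \notin \{9, 13, 15, 16\}$ for $i \in \{9, 13, 15\}$ and $k \geq 1$ in the $\mathit{tmf}$ case, and that $p^k(i+1) - 2 \notin \{2p - 1, 2p^2 - 1, 2p^2\}$ for $i \in \{2p-1, 2p^2 - 1\}$, $k \geq 1$ and $p$ odd in the $\ell$ case. Each amounts to a parity or divisibility argument: for instance, at odd $p$ the quantity $p^k(i+1) - 2$ is even while $p^{k+1}$ and $p^{k+2}$ are odd, so the only remaining candidate $p^{k+1} = p^2 + 1$ fails by parity. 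The main point to verify carefully at odd $p$ is that the divided power algebra $\Gamma_{\F_p}(\rho^0 x)$ has indecomposables $\gamma_{p^k}(\rho^0 x)$ and only $\gamma_1(\rho^0 x) = \rho^0 x$ as a primitive, so that the Hopf-algebraic step goes through verbatim. Once these checks are in place, the spectral sequence collapses at $E^2$ and the claimed additive isomorphism follows.
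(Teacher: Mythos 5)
Your proposal is correct and follows essentially the same route as the paper: the bar/Tor spectral sequence with $E^2$-input taken from Lemma~\ref{lem:thhwithcoeffs}, the Hopf-algebra argument forcing the first nonzero differential to go from an indecomposable to a primitive, and a bidegree computation ruling out all such differentials. If anything, your handling of the odd-prime case is slightly more careful than the paper's (which asserts $k\geq 2$, while $p^k-r=1$ with $r\geq 2$ only forces $p^k\geq 3$, i.e.\ $k\geq 1$ for odd $p$): your parity check on $p^k(i+1)-2$ against $2p-1$, $2p^2-1$, $2p^2$, together with the observation about the primitives and indecomposables of $\Gamma_{\F_p}(\rho^0 x)$, closes that small gap.
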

\begin{proof}
\begin{enumerate}
\item[]
\item
In the case of $ku$ at the even prime we get a degree constraint for a
differential $d^r(\rho^kx_i)$ of the form
$$ (2^k-r, 2^ki + r -1) = (1, j)$$
where $j$ is $3, 7$ or $8$. Since $r\geq 2$ and $2^k-r=1$, we get
$k\geq 2$, but that would make the internal
degree at least $4(i+1)-2$ and this is bigger or equal to $14$,
hence doesn't occur.
\item
For $\mathit{tmf}$ the degree constraint is
$$ (2^k-r, 2^ki + r -1) = (1, j)$$
where $j$ is $9, 13, 15$ or $16$. Again $r\geq 2$ and $2^k-r=1$ imply
that $k\geq 2$, which makes  the internal degree at least $38$.
\item
For the Adams summand $\ell$ the degree condition is
$$ (p^k-r, p^ki + r -1) = (1, j)$$
where $j=2p-1, 2p^2-1$ or $2p^2$. As before, we get that $k \geq 2$
and therefore we get an internal degree of at least $2p^3-2$ which is
too big to be the degree of a primitive element.
\end{enumerate}
In all cases the differentials $d^r$, $r\geq 2$, all have to be
trivial and we get the result.
\end{proof}
\subsection{A splitting for $\THH^{[n], Hk}(HA)$ for
commutative  $k$-algebras $A$}
We apply Theorem \ref{thm:eva2nd} for a sequence of cofibrations of
commutative $S$-algebras of the form
$S \ra A \ra B = C$ and as
$\THH^{[n-1],B}(B) \simeq B$ we obtain a weak equivalence
\begin{equation} \label{eq:splitting2}
\THH^{[n]}(B) \simeq \THH^{[n]}(A; B) \wedge^L_{\THH^{[n-1], A}(B)}
B.
\end{equation}
In the special case of a sequence $S \ra Hk \ra HA = HA$ where $A$ is
a commutative $k$-algebra the formula in
\eqref{eq:splitting2} specializes to the following result.
\begin{prop}
For all commutative rings $k$ and all commutative $k$-algebras $A$ the
higher topological Hochschild homology of $HA$ splits as
$$ \THH^{[n]}(HA) \simeq \THH^{[n]}(Hk; HA) \wedge^L_{\Shukla^{[n-1],
    k}(A)} HA.$$
If $A$ is flat as a $k$-module, then higher Shukla homology reduces to
higher Hochschild homology and we obtain
$$ \THH^{[n]}(HA) \simeq \THH^{[n]}(Hk; HA) \wedge^L_{\HH^{[n-1],
    k}(A)} HA.$$
\end{prop}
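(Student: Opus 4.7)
The plan is to simply specialize the already-established equivalence \eqref{eq:splitting2} to the sequence $S \to Hk \to HA$. Concretely, Theorem \ref{thm:eva2nd} applied to a cofibrant replacement of $S \to Hk \to HA \to HA$ yields, via the reduction $\THH^{[n-1], HA}(HA) \simeq HA$ (which holds because the Loday construction of a cofibrant commutative ring spectrum against itself is weakly equivalent to itself when the target coefficients coincide with the base), the weak equivalence
\[
\THH^{[n]}(HA) \simeq \THH^{[n]}(Hk; HA) \wedge^L_{\THH^{[n-1], Hk}(HA)} HA.
\]
The first claim then follows immediately by invoking the definition of higher order Shukla homology recalled in the remark just after Theorem \ref{thm:Shuklamonoidalgs}, namely $\Shukla^{[n-1], k}(A) = \THH^{[n-1], Hk}(HA)$, where here I take this equation on the level of commutative $Hk$-algebra spectra (the cited definition records only the homotopy groups, but the spectrum-level interpretation is immediate).

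For the second statement, the only task is to replace $\Shukla^{[n-1], k}(A)$ by $\HH^{[n-1], k}(A)$ (or more precisely the Eilenberg–Mac Lane spectrum associated to higher Hochschild homology) under the flatness hypothesis. Here I would appeal to the standard comparison between Shukla and Hochschild homology: when $A$ is flat over $k$, a free simplicial resolution of $A$ over $k$ is already a flat resolution, so the natural map from Shukla to Hochschild homology is a weak equivalence, and this passes through the iterated Loday construction since tensoring (smashing) a flat module with anything preserves weak equivalences in each simplicial degree. Iterating this comparison $n-1$ times gives $\Shukla^{[n-1], k}(A) \simeq \HH^{[n-1], k}(A)$, and plugging this into the first displayed equivalence yields the second.

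The only potentially delicate point is the homotopical bookkeeping: one needs to check that \eqref{eq:splitting2} may legitimately be applied with $B = C = HA$ and $A = Hk$ after passing to cofibrant replacements, and that the identification $\THH^{[n-1], B}(B) \simeq B$ is genuinely available in this generality. Both are essentially automatic from the discussion preceding Theorem \ref{thm:eva2nd} — the tensor-with-a-point computation $B \otimes_B \ast \simeq B$ gives the latter, and the cofibrant replacement machinery for commutative $S$-algebras over $S$ handles the former — so no real obstacle arises. The proposition is therefore a direct specialization plus the Shukla-to-Hochschild comparison under flatness.
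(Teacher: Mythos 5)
Your proposal is correct and follows essentially the same route as the paper: the first equivalence is exactly the specialization of \eqref{eq:splitting2} (itself Theorem \ref{thm:eva2nd} with $B=C$ and $\THH^{[n-1],B}(B)\simeq B$) to the sequence $S\to Hk\to HA=HA$, combined with the definition $\Shukla^{[n-1],k}(A)=\THH^{[n-1],Hk}(HA)$. Your sketch of the Shukla-to-Hochschild comparison under flatness is the standard argument the paper implicitly relies on (it asserts the reduction without proof), so nothing further is needed.
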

In particular, this gives splitting results for number rings: For
$k=\Z$ and $A=\mathcal{O}_K$ a ring of integers in a number field we
get
$$  \THH^{[n]}(H\mathcal{O}_K) \simeq \THH^{[n]}(H\Z; H\mathcal{O}_K)
\wedge^L_{\HH^{[n-1],
    \Z}(\mathcal{O}_K)} H\mathcal{O}_K.$$
The (topological) Hochschild homology of $\mathcal{O}_K$ is known (see
\cite{LL,LM}).  However, the additive and multiplicative structure of these is
complicated enough that we cannot use the iteration methods of
\cite{BLPRZ, DLR}  and so we do not know
the higher order topological Hochschild homology of $\mathcal{O}_K$
with unreduced coefficients so far, nor its higher Shukla homology.

\begin{rem}
Beware that the splitting
\begin{align*}
\THH^{[n]}(HA) & \simeq \THH^{[n]}(Hk; HA) \wedge^L_{\Shukla^{[n-1],
    k}(A)} HA \\ 
& \simeq
(\THH^{[n]}(Hk) \wedge_{Hk}  HA) \wedge^L_{\Shukla^{[n-1], k}(A)} HA 
\end{align*}
cannot be rearranged to
$$  \THH^{[n]}(Hk) \wedge_{Hk} (HA \wedge^L_{\Shukla^{[n-1],
    k}(A)} HA) = \THH^{[n]}(Hk) \wedge_{Hk} \Shukla^{[n],
    k}(A)$$
because  the $\Shukla^{[n-1],k}(A)$-action on $ \THH^{[n]}(Hk; HA) $
does not usually factor through an action  on the coefficients $HA$.
If we could rearrange it that way, it would imply that
$\Shukla^{[n],k}(A)$ splits
off $\THH^{[n]}(HA)$, which is not true even for $n=1$: for example, if
we take $k=\Z$ and $A=\Z[i]$,  since $ \THH(H\Z)$ as the topological Hochschild
homology of a ring is equivalent to a product of Eilenberg Mac Lane spectra, which
B\"okstedt \cite{B} identified to be
$$ \THH(H\Z)\simeq H\Z \times\prod_{a=2}^\infty \Sigma ^{2a-1}H(\Z/a\Z),$$
then we get the formula
\begin{align*}
\pi_*( &  \THH(H\Z) \wedge_{H\Z} \Shukla(\Z[i])) \\
&\cong \pi_*(\Shukla(\Z[i])) \oplus\bigoplus_{a=2}^\infty  \pi_*(\Sigma ^{2a-1}H(\Z/a\Z) \wedge_{H\Z} \Shukla(\Z[i] ) ) \\
&\cong
\HH_*(\Z[i]) \oplus\bigoplus_{a=2}^\infty \Bigl( \HH_{*-2a+1}(\Z[i]) \otimes \Z/a\Z \oplus \Tor (\HH_{*-2a}(\Z[i]) ,\Z/a\Z )\Bigr)
\end{align*}
where  $ \HH_*(\Z[i]) =0$ when $*<0$. We also know that
$\HH_0(\Z[i])\cong \Z[i]$, $\HH_{2a-1}(\Z[i])\cong \Z[i]/2\Z[i]$, and
the positive even groups vanish.  Thus the number of copies of  $\Z/2\Z$'s in $\pi_n( \THH(H\Z) \wedge_{H\Z} \Shukla(\Z[i]))$ grows linearly with $n$.
On the other hand, by \cite{L},
$\THH_0(\Z[i])\cong\Z[i]$, $\THH_{2a-1}(\Z[i])\cong\Z[i]/2a\Z[i]$, and
the positive even groups vanish.

Such a splitting of $ \Shukla^{[n],k}(A)$
off $\THH^{[n]}(HA)$ \emph{does} hold under additional assumptions, for
instance in the case of commutative pointed monoid rings (see
\eqref{eq:splitting} above).
\end{rem}

\end{document}